\newtheorem{theorem}{Theorem}[section]
\newtheorem{definition}[theorem]{Definition}
\newtheorem{lemma}[theorem]{Lemma}
\newtheorem{proposition}[theorem]{Proposition}
\newtheorem{remark}[theorem]{Remark}
\newcommand{\s}{\mathbb{S}}
\newcommand{\rr}{\mathbb{R}}
\newcommand{\hh}{\mathbb{H}}
\title{\bf Extension results for slice regular functions of a quaternionic variable}
\author{Fabrizio Colombo\\ \normalsize Dipartimento di Matematica, Politecnico di
Milano\\ \normalsize Via Bonardi, 9, 20133 Milano, Italy,
fabrizio.colombo@polimi.it \and Graziano Gentili\\
\normalsize Dipartimento di Matematica, Universit\`a di Firenze, \\
\normalsize Viale Morgagni, 67 A, Firenze, Italy,
gentili@math.unifi.it
\\
\and Irene Sabadini\\ \normalsize Dipartimento di Matematica,
Politecnico di Milano\\ \normalsize Via Bonardi, 9, 20133 Milano,
Italy, irene.sabadini@polimi.it \and Daniele Struppa
\\ \normalsize Department of Mathematics and Computer Science
\\ \normalsize Schmid college of science
\\ \normalsize Chapman University, Orange, CA 92866 USA,
struppa@chapman.edu }
\date{ }
\begin{document}
\maketitle
\begin{abstract}
In this paper we prove a new representation formula for slice regular functions, which shows that the value of a slice regular function $f$ at a point $q=x+yI$ can be recovered by the values of $f$ at the points $q+yJ$ and $q+yK$ for any choice of imaginary units $I, J, K.$ This result allows us to extend the known properties of slice regular functions defined on balls centered on the real axis to a much larger class of domains, called axially symmetric domains. We show, in particular, that axially symmetric domains play, for slice regular functions, the role played by domains of holomorphy for holomorphic functions.
\end{abstract}

AMS Classification: 30G35.

\section{Introduction}
The theory of slice regular functions has been recently introduced in \cite{gs} and \cite{advances}. Since then, the theory has been extensively studied in a series of papers which show on one hand the richness of this class of functions (see for example \cite{cgs}, \cite{caterina} and \cite{open}).  At the same time, this theory has demonstrated its interest by allowing a new application to the theory of quaternionic linear operators (see \cite{cgssann}, \cite{cgss}, \cite{cgss2}, \cite{cosa}).  The theory of slice regular functions is quite different from the more classical theory of regular functions in the sense of Cauchy-Fueter, see \cite{fueter 1}, \cite{fueter 2}, and the more recent \cite{csss}, and when compared with such theory, it shows many new and interesting features, such as the fact that it includes both polynomials and power series in the variable $q$.

 The original presentation of the theory, and the results obtained so far, have however a fundamental shortcoming. Up to now, indeed, the entire theory relies on the fact that slice regular functions admit power series expansion only on balls with center at real points. This clearly limits the applicability of the theory. In a recent paper \cite{cgs}, however, we proved a formula which relates the value that a slice regular function assumes at a point $q$ to the values it assumes on two suitable conjugate points. This formula, interesting in its own merit, can actually be generalized by allowing replacing the two conjugate points with two points who lie on two different complex planes (Theorem \ref{GRF}). It is such a generalization that turns out to be crucial  to prove the Extension Theorem \ref{circular extendability}, which will allow to extend the validity of the results known for quaternionic power series to the case of slice regular functions defined on suitable domains  called axially symmetric s-domains (see Definition \ref{def_circular}). We will show that axially symmetric s-domains have the same features, for slice regular functions, of domains of holomorphy for holomorphic functions.
In particular, this permits us to extend some of the main results in \cite{advances}, and to obtain a notion of product of slice regular functions which gives a slice regular result, as well as a notion of regular reciprocal of a function.

\section{Preliminaries on slice regular functions}

In this section we collect the basic definitions, and the basic results already obtained, in the theory of slice regular functions.

We recall that the skew field of quaternions $\hh$ is obtained by endowing $\rr^4$ with the classical multiplication operation: if $1,i,j,k$ denotes the standard basis, define $i^2 = j^2 = k^2 = -1,$  $ ij = -ji = k, jk = -kj = i, ki = -ik = j,$ let $1$ be the neutral element and extend the operation by distributivity to all quaternions $q = x_0 + x_1 i + x_2 j + x_3 k$. If we define the \emph{conjugate} of such a $q$ as $\bar q = x_0 - x_1 i - x_2 j - x_3 k$, its \emph{real} and \emph{imaginary part} as $Re(q) = x_0$ and $Im(q) = x_1 i + x_2 j + x_3 k$ and its \emph{modulus} by $|q|^2= q\bar q = Re(q)^2 + |Im(q)|^2$, then the multiplicative inverse (the reciprocal) of each $q \neq 0$ is computed as $q^{-1} = \frac{\bar q}{|q|^2}.$ In the sequel we will denote by $\mathbb{S}$
the unit 2-sphere  of purely imaginary
quaternions, i.e.
$$
\mathbb{S}=\{q\in \hh : Re(q) = 0 \ \ {\rm and}\ \  |Im(q)|=1 \}.
$$
Any element $I\in\mathbb{S}$ is characterized by the equation
$I^2=-1$ and for this reason the elements of $\mathbb{S}$ are called
imaginary units.
Moreover, for any $I\in\mathbb{S}$, one may consider the complex plane $L_I={\rm span}_{\mathbb{R}}\{1,I\}$ whose elements will be denoted by $x+yI$. Note that to any non-real quaternion $q$ is associated a unique element in $\mathbb{S}$ by the map
$$
q\mapsto\frac{{\rm Im}(q)}{|{\rm Im}(q)|}:=I_q.
$$
Thus, for any non-real
quaternion $q\in \mathbb{H} \backslash \mathbb{R}$, there exist,
and are unique, $x, y \in \mathbb{R}$ with $y>0$ and $I_q\in \mathbb{S}$, such that $q=x+yI_q$.
If the quaternion $q$ is real then, since $y=0$, $I_q$ can be any element of $\mathbb{S}$.

Let us now recall the definition of slice regularity (rephrased here in a slightly more general way than in the
original \cite{advances}):
\begin{definition}\label{regularity} Let $\Omega$ be a domain in
$\mathbb{H}$. A function $f:\Omega \to \mathbb{H}$ is said to be
slice left regular if, for every $I \in
\mathbb{S}$, its restriction $f_I$ to the complex line
$L_I=\mathbb{R}+\mathbb{R}I$ passing through the origin and
containing $1$ and $I$ has continuous partial derivatives and satisfies
$$\overline{\partial}_If(x+yI):=\frac{1}{2}\left(\frac{\partial}{\partial x}
+I\frac{\partial}{\partial y}\right)f_I(x+yI)=0,$$ on $\Omega \cap
L_I$.
\end{definition}
\noindent One may give an analogous definition for slice right regularity. This latter notion will give a theory completely equivalent to the one of slice left regular functions.
In this paper,
unless explicitly stated, we will always
refer to slice left regular functions and call them \emph{regular functions} for short. It is worth to point out that an analogous theory can be developed for functions with values in a Clifford algebra which are called slice monogenic, see \cite{slicecss}.

%\begin{remark}{\rm
%In definition \ref{regularity}, the requirement that $f_I$ has continuous partial derivatives $\frac{\partial}{\partial x}f_I$ and $\frac{\partial}{\partial y}f_I$ can %be removed by adopting a more general approach in the sense of distributions, as it can be done in the case of holomorphic maps. Here we want simply to extend to $f_I$ %the usual definition of holomorphic map.}
%\end{remark}

A notion of ``Gateaux-type" derivative can be given for regular functions, as follows:

\begin{definition}\label{derivative}
Let $\Omega$ be a domain in $\mathbb{H}$, and let $f:\Omega \to
\mathbb{H}$ be a regular function. The slice derivative (in
short derivative) of $f$, $\partial_s f$, is defined as follows:
\begin{displaymath}
\partial_s(f)(x+yI) = \partial_I(f)(x+yI)= \frac{1}{2}\left(\frac{\partial}{\partial x}
-I\frac{\partial}{\partial y}\right)f_I(x+yI)
\end{displaymath}
\end{definition}
Notice that the definition of derivative is well posed because
it is applied only to regular functions for which
$$
\frac{\partial}{\partial x}f(x+yI)= -I\frac{\partial}{\partial
y}f(x+yI)\qquad \forall I\in\mathbb{S},
$$
and therefore, analogously to what happens in the complex case,
$$ \partial_s(f)(x+yI) =
\partial_I(f)(x+yI)=\partial_x(f)(x+yI).
$$
Indeed, as a consequence, for the quaternions $x$ of the real axis, for which a corresponding $I_x$ is not uniquely defined, we obtain, independently of $I_x$,
$$
\partial_s(f)(x)=\partial_x(f)(x).
$$
If $f$ is a regular function, then also its derivative
is regular because we have
\begin{equation}\label{derivatareg}\overline{\partial}_I(\partial_sf(x+Iy))
=\partial_s(\overline{\partial}_If(x+Iy))=0,\end{equation} and
therefore
$$
\partial^n_sf(x+yI)=\frac{\partial^n
f}{\partial x^n}(x+yI).
$$
A crucial result which we will widely use in this paper is the following (see \cite{advances}):
\begin{lemma}[Splitting Lemma]\label{lemma spezzamento}  If $f$ is a regular function
on an open set $\Omega$, then for
every $I \in \mathbb{S}$, and every $J$ in $\mathbb{S}$,
perpendicular to $I$, there are two holomorphic functions
$F,G:\Omega\cap L_I \to L_I$ such that for any $z=x+yI$, it is
$$f_I(z)=F(z)+G(z)J.$$
\end{lemma}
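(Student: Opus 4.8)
The plan is to reduce the statement to the classical Cauchy--Riemann equations on the complex plane $L_I$, by exploiting the direct sum decomposition of $\hh$ induced by the choice of $I$ and $J$. First I would fix $I\in\s$ and $J\in\s$ with $J\perp I$, and set $K=IJ$, so that $\{1,I,J,K\}$ is an orthonormal real basis of $\hh$. Grouping coordinates, every $q=a+bI+cJ+dK$ can be written uniquely as $q=(a+bI)+(c+dI)J$ with $a+bI,\ c+dI\in L_I$; in other words $\hh=L_I\oplus L_IJ$ as real vector spaces. Applying this pointwise to the restriction $f_I:\Omega\cap L_I\to\hh$, I obtain two uniquely determined functions $F,G:\Omega\cap L_I\to L_I$ such that $f_I(z)=F(z)+G(z)J$ for every $z=x+yI$. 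Since $f_I$ has continuous partial derivatives with respect to $x$ and $y$ and the projections onto the two summands are $\rr$-linear, $F$ and $G$ inherit this regularity.

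Next I would feed the decomposition into the regularity equation. Writing $\overline{\partial}_I=\frac12(\partial_x+I\partial_y)$ and using $\rr$-linearity,
\[
0=\overline{\partial}_I f_I(z)=\overline{\partial}_I F(z)+\bigl(\overline{\partial}_I G(z)\bigr)J .
\]
Here one must observe that left multiplication by $I$ sends $L_I$ into $L_I$ (because $L_I$ is commutative) and sends $L_IJ$ into $L_IJ$, since $I(wJ)=(Iw)J$ with $Iw\in L_I$ whenever $w\in L_I$; hence $\overline{\partial}_I F(z)\in L_I$ while $\bigl(\overline{\partial}_I G(z)\bigr)J\in L_IJ$. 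As the decomposition $\hh=L_I\oplus L_IJ$ is direct, the two components must vanish separately, so $\overline{\partial}_I F=0$ and $\overline{\partial}_I G=0$ on $\Omega\cap L_I$.

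Finally, identifying $L_I$ with $\cc$ via $x+yI\mapsto x+iy$, the equations $\overline{\partial}_I F=0$ and $\overline{\partial}_I G=0$ are exactly the Cauchy--Riemann system, so $F$ and $G$ are holomorphic as maps $\Omega\cap L_I\to L_I$, which is the assertion of the lemma.

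I expect the only genuinely delicate point to be the middle step: one has to be careful that the operator $\overline{\partial}_I$ is ``block diagonal'' with respect to $\hh=L_I\oplus L_IJ$, i.e.\ that multiplying an $L_IJ$-valued quantity on the left by $I$ does not leak into the $L_I$ component. This is precisely where the commutativity of $L_I$ and the associativity identity $I(wJ)=(Iw)J$ enter; once this is in place, separating the ``$L_I$-part'' from the ``$J$-part'' and invoking the one-variable complex theory is routine.
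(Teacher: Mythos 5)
Your proof is correct and is essentially the standard argument: the paper does not reproduce a proof (it cites \cite{advances} and merely remarks that the proof there works on any open set), and the proof in that reference is exactly your decomposition $\mathbb{H}=L_I\oplus L_IJ$ followed by the observation that $\overline{\partial}_I$ respects this splitting, forcing $F$ and $G$ to satisfy the Cauchy--Riemann equations separately. You correctly identify and justify the one delicate point, namely that $I(wJ)=(Iw)J$ keeps the operator block diagonal.
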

\begin{remark}{\rm The statement of the Splitting Lemma given in \cite{advances} is given for functions defined on open balls with center at the origin, but the proof works for any open set.}\end{remark}
Another fundamental property of regular functions is that they admit power series expansion when defined on balls with center at real points (see \cite{advances}):
\begin{theorem}\label{svilluppo}
Let $B$ be a ball with center at a real point $p_0$ and with radius $R>0$.
A function $f:B \to \mathbb{H}$ is regular if, and only if,  it has a series
expansion of the form
$$f(q)=\sum_{n=0}^\infty (q-p_0)^n \frac{1}{n!} \frac{\partial^n f}{\partial x^n}(p_0)$$
converging on B.
\end{theorem}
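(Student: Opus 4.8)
The plan is to prove the two implications separately, with the Splitting Lemma (Lemma~\ref{lemma spezzamento}) doing the real work in the nontrivial direction. For the ``if'' direction I would start from a function $f$ given on $B$ by the stated series $\sum_{n\ge 0}(q-p_0)^n c_n$ with $c_n=\frac{1}{n!}\frac{\partial^n f}{\partial x^n}(p_0)$, and check regularity directly: since $p_0\in\mathbb{R}$, the restriction to any $L_I$ of the monomial $q\mapsto (q-p_0)^n$ is $z\mapsto(z-p_0)^n$, which is holomorphic in $z=x+yI$, so every partial sum satisfies $\overline{\partial}_I(\cdot)=0$; and because $|(q-p_0)^n c_n|=|q-p_0|^n|c_n|$, the given pointwise convergence on $B$ upgrades by Abel's argument to absolute, locally uniform convergence of the series together with its formal first derivatives in $x$ and $y$, which legitimates termwise application of $\overline{\partial}_I$ and gives $\overline{\partial}_I f_I\equiv 0$ on $B\cap L_I$ for every $I$.

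For the ``only if'' direction I would fix $I\in\mathbb{S}$, pick $J\in\mathbb{S}$ with $J\perp I$, and invoke the Splitting Lemma to write $f_I(z)=F(z)+G(z)J$ with $F,G:B\cap L_I\to L_I$ holomorphic. The key geometric point is that, $p_0$ being real, $B\cap L_I$ is the disc of radius $R$ about $p_0$ inside $L_I\cong\mathbb{C}$, so $F$ and $G$ have genuine Taylor expansions $F(z)=\sum_n a_n(z-p_0)^n$ and $G(z)=\sum_n b_n(z-p_0)^n$, with $a_n,b_n\in L_I$, converging on all of $B\cap L_I$; since $a_n,b_n$ commute with $z-p_0$, collecting terms and setting $c_n:=a_n+b_nJ\in\mathbb{H}$ yields
$$
f_I(z)=\sum_{n=0}^{\infty}(z-p_0)^n c_n,\qquad z\in B\cap L_I.
$$

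I would then identify the coefficients using the preliminary material: since $\partial_s^n f=\partial^n f/\partial x^n$ for regular $f$ and $\partial_s$ acts on $f_I$ as $\partial/\partial z$ along $L_I$, differentiating the above series $n$ times and evaluating at $z=p_0$ gives $n!\,c_n=\partial_s^n f(p_0)=\frac{\partial^n f}{\partial x^n}(p_0)$. To globalize, Cauchy estimates for $F,G$ on circles of radius $r<R$ give $|c_n|\le C(r)\,r^{-n}$, whence $\limsup_n|c_n|^{1/n}\le 1/R$, and therefore the quaternionic series $\sum_n(q-p_0)^n c_n$ converges absolutely, uniformly on compact subsets of $\{|q-p_0|<R\}=B$. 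For an arbitrary $q\in B$ I would write $q=x+yI_q\in L_{I_q}$ and apply the slice formula above on the slice $L_{I_q}$, obtaining $f(q)=\sum_n(q-p_0)^n c_n=\sum_n(q-p_0)^n\frac{1}{n!}\frac{\partial^n f}{\partial x^n}(p_0)$.

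The one step I expect to be genuinely delicate is the slice-independence of the coefficients $c_n$: the Splitting Lemma is applied slice by slice, so a priori the quaternions $c_n$ produced on $L_I$ could depend on $I$, and to apply the slice formula at a point $q$ lying on $L_{I_q}$ one must know they do not. This is precisely where the hypothesis $p_0\in\mathbb{R}$ is essential, since $\frac{\partial^n f}{\partial x^n}(p_0)=\partial_s^n f(p_0)$ is computed from $\partial_x$ alone, independently of any imaginary unit; everything else in the argument is routine bookkeeping with the Splitting Lemma and elementary power-series estimates.
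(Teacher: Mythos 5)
The paper does not actually prove Theorem~\ref{svilluppo}; it is recalled without proof from \cite{advances}, so there is no internal argument to compare against. Your proof is correct and is essentially the standard one from that reference: the ``only if'' direction via the Splitting Lemma, Taylor expansion of $F$ and $G$ on the disc $B\cap L_I$, and the observation that the coefficients $\frac{1}{n!}\frac{\partial^n f}{\partial x^n}(p_0)$ are computed from the restriction of $f$ to the real axis (which lies in every slice) and are therefore independent of $I$ --- you correctly flag this as the point where $p_0\in\mathbb{R}$ is indispensable. The ``if'' direction via termwise verification of $\overline{\partial}_I=0$ and the multiplicativity of the quaternionic norm is likewise sound. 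No gaps.
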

Even if the definition of regular functions can be given on open sets of $\hh$, the corresponding function theory is meaningful only when we consider  special classes of open sets. Consider, for instance,  the domain
$$
\Omega =\mathbb{H}\setminus \mathbb{R}
$$
It is immediate to see that every function of the form
$$
f(q)=\left\{
\begin{array}{lll}
&1\quad &\quad{\rm on}\  \Omega \cap L_I, I\not= i \\
&q^2=(x+iy)^2&\quad{\rm on}\  \Omega \cap L_i \\
\end{array}\right.
$$
is regular but it is not even continuous. A class of domains which allows to avoid such
undesirable phenomena is introduced in the following

\begin{definition}
Let $\Omega \subseteq \mathbb{H}$ be a domain in $\mathbb{H}$. We
say that $\Omega$ is a \textnormal{slice domain (s-domain for
short)} if $\Omega \cap \mathbb{R}$ is non empty and if $L_I\cap
\Omega$ is a domain in $L_I$ for all $I \in \mathbb{S}$.
\end{definition}
Notice that the requirement that an s-domain $\Omega$ be connected
on every $L_I$ is stronger than requiring that $\Omega$ be
connected. However, this hypothesis turns out to be
necessary since, by definition, regular functions are
holomorphic on every slice $L_I$, for all $I\in \mathbb{S}$.
On s-domain we can extend the validity of Theorem \ref{svilluppo}:
\begin{proposition}\label{serie_domini}
Let $f$ be a regular function on an s-domain $\Omega$. Then for any real point
$p_0$ in $\Omega$, the function
$f$ can be represented in power series
$$f(q)=\sum_{n=0}^\infty (q-p_0)^n \frac{1}{n!} \frac{\partial^n f}{\partial x^n}(p_0)$$
on the ball $B(p_0,R)$ where $R=R_{p_0}$ is the largest positive
real number such that $B(p_0,R)$ is contained in $\Omega$.
\end{proposition}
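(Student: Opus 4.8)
The plan is to reduce the statement directly to Theorem \ref{svilluppo} by restriction. Since $\Omega$ is open and $p_0\in\Omega$ is real, as long as $\Omega\neq\hh$ there is a well-defined largest real number $R=R_{p_0}>0$ with $B:=B(p_0,R)\subseteq\Omega$; if instead $\Omega=\hh$ one runs the argument below on the exhausting family of balls $B(p_0,n)$ and lets $n\to\infty$. The key observation is simply that $B$, being an open ball contained in $\Omega$, is itself a legitimate domain on which the restriction $f|_B$ is a regular function, and its centre $p_0$ is a real point.

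Now I would apply Theorem \ref{svilluppo} to $f|_B$. Since $f|_B$ is regular on the ball $B$ centred at the real point $p_0$, that theorem yields the series representation
$$f(q)=\sum_{n=0}^\infty (q-p_0)^n\,\frac{1}{n!}\,\frac{\partial^n f}{\partial x^n}(p_0),\qquad q\in B,$$
with the series converging throughout $B=B(p_0,R_{p_0})$ — which is exactly the claimed conclusion. The one point deserving a remark is that the Taylor coefficients appearing here, the iterated real $x$-derivatives of $f$ at $p_0$, are intrinsic to $f$: the operator $\partial^n/\partial x^n$ is local, so its value at $p_0$ is the same whether computed inside $B$ or inside $\Omega$. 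Moreover, as recalled after Definition \ref{derivative}, for a regular function $\partial_s^n f=\partial^n f/\partial x^n$, so these coefficients also equal $\tfrac{1}{n!}\partial_s^n f(p_0)$, and one gets their uniqueness for free by noting that two power series centred at $p_0$ which agree for $q$ in a real neighbourhood of $p_0$ (where $q-p_0$ is a real variable) must have equal coefficients.

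I do not expect a genuine obstacle in this proof: its whole content is recognizing that the largest ball around $p_0$ inside $\Omega$ is a ball with real centre, so Theorem \ref{svilluppo} applies verbatim and already delivers convergence on the full ball. The s-domain hypothesis in the statement plays only a cosmetic role here — it guarantees $\Omega\cap\rr\neq\emptyset$ so that a real point $p_0$ exists — while the connectedness of the slices $L_I\cap\Omega$ is not actually used. (That hypothesis becomes essential, of course, in the stronger extension results that follow.)
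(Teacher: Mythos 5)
Your proof is correct and is in substance the paper's own argument: the paper expands $f$ slice by slice on the maximal discs $\Delta_I(p_0,a_I)\subseteq\Omega\cap L_I$ and notes that $R=\min_{I\in\mathbb{S}}a_I$, which is exactly the radius of the largest ball about $p_0$ contained in $\Omega$, so restricting to that ball and invoking Theorem \ref{svilluppo} wholesale, as you do, packages the same computation. Your side remarks (locality of the coefficients $\frac{\partial^n f}{\partial x^n}(p_0)$, the purely existential role of the s-domain hypothesis here, and the treatment of the case $\Omega=\mathbb{H}$ by exhaustion) are all accurate.
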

\begin{proof}
Since $f$ is regular in $p_0$ then, for every $I\in\mathbb{S}$,
$f$ can be expanded in power series on the maximal disc $\Delta_I(p_0,a_I)$
of radius $a_I$ in
$\Omega\cap L_I$. The radius $R$ turns out to be  $\min_{I\in\mathbb{S}} a_I$
which is nonzero because $p_0$ is an internal point in $\Omega$.
\end{proof}
Another important result whose validity holds on s-domain is the Identity Principle, which can be stated (and proved as in \cite{advances}) as follows:
\begin{theorem}[Identity Principle] \label{identity principle} Let $f:\Omega\to\mathbb{H}$ be a regular function
on an s-domain $\Omega$. Denote by $Z_f=\{q\in \Omega : f(q)=0\}$
the zero set of $f$. If there exists $I \in \mathbb{S}$ such that
$L_I \cap Z_f$ has an accumulation point, then $f\equiv 0$ on
$\Omega$.
\end{theorem}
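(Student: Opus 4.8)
The plan is to reduce everything to the classical Identity Principle for holomorphic functions of one complex variable, using the Splitting Lemma (Lemma \ref{lemma spezzamento}) to pass between the quaternionic and the complex settings, and using the real axis as the set along which the vanishing of $f$ propagates from one slice to another.

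First I would fix the imaginary unit $I\in\mathbb{S}$ for which $L_I\cap Z_f$ has an accumulation point, choose $J\in\mathbb{S}$ with $J\perp I$, and apply the Splitting Lemma to write $f_I(z)=F(z)+G(z)J$ on $\Omega\cap L_I$, with $F,G:\Omega\cap L_I\to L_I$ holomorphic. Since $\Omega$ is an s-domain, $\Omega\cap L_I$ is a domain in $L_I\cong\mathbb{C}$, and $Z_f\cap L_I$ is contained in the common zero set of $F$ and $G$. Hence both $F$ and $G$ vanish on a subset of $\Omega\cap L_I$ having an accumulation point in $\Omega\cap L_I$, and the classical Identity Principle gives $F\equiv 0$ and $G\equiv 0$; therefore $f_I\equiv 0$ on $\Omega\cap L_I$. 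In particular $f$ vanishes on $\Omega\cap\mathbb{R}$, which is nonempty precisely because $\Omega$ is an s-domain.

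Next I would propagate this to an arbitrary slice. Fix any $K\in\mathbb{S}$. Since $\mathbb{R}\subset L_K$, the restriction $f_K$ vanishes on $\Omega\cap\mathbb{R}$, which is a nonempty open subset of $\mathbb{R}$ and hence has accumulation points, all lying in the domain $\Omega\cap L_K$. Applying the Splitting Lemma on $L_K$ and repeating the one-variable argument of the previous paragraph, I get $f_K\equiv 0$ on $\Omega\cap L_K$. Since $\Omega=\bigcup_{K\in\mathbb{S}}(\Omega\cap L_K)$, this yields $f\equiv 0$ on $\Omega$.

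The only point requiring care is the role of the s-domain hypothesis: it guarantees on one hand that each $\Omega\cap L_K$ is connected, so that the one-variable Identity Principle can be applied on it, and on the other hand that $\Omega$ meets the real axis, which is the common ``spine'' of all the slices along which the vanishing is transmitted; the example $\Omega=\mathbb{H}\setminus\mathbb{R}$ preceding the statement shows that without such hypotheses the conclusion fails. Everything else is a routine transcription of the holomorphic Identity Principle, exactly as in \cite{advances}. One could alternatively deduce, as an intermediate step, that $f\equiv 0$ on a ball $B(p_0,R)$ centered at a real point $p_0\in\Omega$ by invoking Proposition \ref{serie_domini} (all derivatives $\partial^n f/\partial x^n(p_0)$ vanish), but the slice-by-slice argument above makes this detour unnecessary.
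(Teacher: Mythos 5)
Your proof is correct and is essentially the argument the paper relies on: the paper does not write out a proof but defers to \cite{advances}, where the same reduction via the Splitting Lemma to the one-variable Identity Principle is used, with the vanishing propagated through the real points common to all slices. The only implicit reading you (rightly) make is that the accumulation point of $L_I\cap Z_f$ lies in $\Omega$, which is the intended interpretation of the statement.
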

Among the s-domains, we will be interested in a special subclass identified by the additional requirement that they are axially symmetric, according to the following:
\begin{definition}\label{def_circular}
Let $\Omega \subseteq \mathbb{H}$. We say that $\Omega$ is
\textnormal{axially symmetric} if, for all $x+yI \in \Omega$, the whole
2-sphere $x+y\mathbb{S}$ is contained in $\Omega$.
\end{definition}
As we will see in the sequel of this paper, the axially symmetric s-domains will turn out to be the natural domains of definition for regular functions, much like the domains of holomorphy are the natural domains for holomorphic functions.

\section{Representation Formulas}
This section contains two different  Representation Formulas, at different levels of generality, for regular functions defined on axially symmetric s-domains. These formulas will then play a key role in the next sections, where they will allow us to prove the Extension Theorems for regular functions.

The next result shows that the value of a regular function at a point $q$ belonging to an axially symmetric s-domain is related to the values that the function assumes at two conjugate points on a same plane $L_J$ and belonging to the 2-sphere associated to $q$ (see Theorem 2.26 in \cite{cgs}, as well as \cite{cosa}):
\begin{theorem}[Representation Formula]\label{formula} Let
$f$ be a regular function on an axially symmetric s-domain
$\Omega\subseteq  \mathbb{H}$. Choose any $J\in \mathbb{S}$.  Then
the following equality holds for all $q=x+yI \in \Omega$:
\begin{equation}\label{distribution}
f(x+yI) =\frac{1}{2}\Big[   f(x+yJ)+f(x-yJ)\Big]
+I\frac{1}{2}\Big[ J[f(x-yJ)-f(x+yJ)]\Big].
\end{equation}
Moreover, for all $x, y \in \mathbb{R}$ such that $x+y\mathbb{S}
\subseteq \Omega$, there exist $b, c \in \mathbb{H}$ such that for
all $K \in \mathbb{S}$ we have
\begin{equation}\label{cappaI}
\frac{1}{2}\Big[   f(x+yK)+f(x-yK)\Big]=b \quad \quad {\sl and}
\quad \quad \frac{1}{2}\Big[ K[f(x-yK)-f(x+yK)]\Big]=c.
\end{equation}
As a consequence $b$ and $c$ do not depend on $K\in\mathbb{S}$.
\end{theorem}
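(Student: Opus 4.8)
The plan is to first establish the identity \eqref{distribution} by reducing it to the classical complex-analytic situation via the Splitting Lemma, and then to extract the uniqueness statement \eqref{cappaI} from \eqref{distribution} itself by a symmetry argument. For the first part, fix $J\in\mathbb{S}$ and pick $K\in\mathbb{S}$ with $K\perp J$, so that $1,J,K,JK$ is an orthonormal basis of $\mathbb{H}$. By the Splitting Lemma applied on $L_J$, there are holomorphic functions $F,G:\Omega\cap L_J\to L_J$ with $f_J(x+yJ)=F(x+yJ)+G(x+yJ)K$. The key elementary observation is that $F$ and $G$, being holomorphic into $L_J\cong\mathbb{C}$, can be written via their even and odd parts in $y$: writing $F(x+yJ)=\alpha(x,y)+\beta(x,y)J$ with $\alpha$ even and $\beta$ odd in $y$ on account of the Cauchy--Riemann equations (and symmetrizing), one checks directly that
\begin{equation}\label{plan-eq}
F(x+yJ)=\frac{1}{2}\bigl[f_J(x+yJ)+f_J(x-yJ)\bigr],\qquad G(x+yJ)K=\frac{1}{2}\bigl[f_J(x+yJ)-f_J(x-yJ)\bigr].
\end{equation}
Here one uses that the $L_J$-part and the $L_J K$-part of $f_J$ are exactly $F$ and $GK$, and then splits each into the part even in $y$ and the part odd in $y$.

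Next I would observe that the right-hand side of \eqref{distribution} is, by \eqref{plan-eq}, precisely
$$\tfrac12\bigl[f(x+yJ)+f(x-yJ)\bigr]+I\cdot\tfrac12 J\bigl[f(x-yJ)-f(x+yJ)\bigr] = \bigl(\text{even part of }f_J\text{ in }y\bigr) + I\cdot J^{-1}\cdot(\text{odd part}),$$
and the task is to show this equals $f(x+yI)$. For $I=\pm J$ this is immediate from \eqref{plan-eq}. For general $I$, I would argue that both sides, as functions of $q=x+yI$ ranging over the 2-sphere $x+y\mathbb{S}$ with $(x,y)$ fixed, are the restrictions of regular functions, and they agree on $L_J\cap(x+y\mathbb{S})$; alternatively, and more cleanly, one checks that the function $g(x+yI):=\bigl[\text{RHS of }\eqref{distribution}\bigr]$ is itself regular on $\Omega$ (its restriction to each $L_I$ satisfies the Cauchy--Riemann-type equation, since the even/odd decomposition interacts correctly with $\overline\partial_I$), and that $g$ coincides with $f$ on $\Omega\cap L_J$, which contains a real segment; the Identity Principle (Theorem \ref{identity principle}) then forces $g\equiv f$ on $\Omega$. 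Verifying regularity of $g$ is the main technical obstacle: one must compute $\tfrac12(\partial_x+I\partial_y)$ applied to the expression and see that the contributions of the even and odd parts cancel, using that the even part of a holomorphic function and $y^{-1}$ times its odd part are again (components of) holomorphic functions.

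Finally, for \eqref{cappaI}: fix $x,y$ with $x+y\mathbb{S}\subseteq\Omega$, and apply the already-proved formula \eqref{distribution} with the roles of $J$ and $K$ interchanged, i.e. evaluate $f(x+yK')$ for arbitrary $K'\in\mathbb{S}$ using the representation based on the plane $L_J$. Plugging $I=K'$ into \eqref{distribution} and then forming $\tfrac12[f(x+yK')+f(x-yK')]$ and $\tfrac12 K'[f(x-yK')-f(x+yK')]$, the dependence on $K'$ in $I=K'$ combines with the dependence introduced by conjugation $x-yK'$, and a short computation shows that the odd-in-$K'$ terms cancel, leaving expressions equal to $b:=\tfrac12[f(x+yJ)+f(x-yJ)]$ and $c:=\tfrac12 J[f(x-yJ)-f(x+yJ)]$ respectively, independently of $K'$. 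This last step is a routine algebraic manipulation once \eqref{distribution} is in hand, so I expect it to present no real difficulty; the weight of the proof lies entirely in establishing \eqref{distribution}, and within that, in the regularity check for $g$.
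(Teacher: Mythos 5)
Your overall strategy for \eqref{distribution} --- define $g(x+yI)$ to be the right-hand side, verify that $g$ is regular on $\Omega$, observe that $g$ coincides with $f$ on $\Omega\cap L_J$, and invoke the Identity Principle (Theorem \ref{identity principle}) --- is exactly the method the paper uses: it carries it out for the more general Theorem \ref{GRF}, of which \eqref{distribution} is the special case $K=-J$. Your derivation of \eqref{cappaI} from \eqref{distribution} is also correct: writing $f(x+yK')=b_0+K'c_0$ with $b_0=\tfrac12[f(x+yJ)+f(x-yJ)]$ and $c_0=\tfrac12 J[f(x-yJ)-f(x+yJ)]$, replacing $K'$ by $-K'$ gives $f(x-yK')=b_0-K'c_0$, and the two combinations in \eqref{cappaI} return $b_0$ and $c_0$ independently of $K'$.

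However, your ``key elementary observation'' is false. You claim that the Splitting Lemma components $F,G$ of $f_J=F+GK$ satisfy
$$F(x+yJ)=\tfrac12\bigl[f_J(x+yJ)+f_J(x-yJ)\bigr],\qquad G(x+yJ)K=\tfrac12\bigl[f_J(x+yJ)-f_J(x-yJ)\bigr],$$
i.e.\ that $F$ is even and $GK$ is odd under $y\mapsto-y$. A holomorphic $F:\Omega\cap L_J\to L_J$ has this symmetry only when it is real on the real axis (Schwarz reflection); in general it does not. Counterexample: $f(q)=qJ$ is regular, $f_J(x+yJ)=(x+yJ)J=-y+xJ$, so $F(z)=zJ$ and $G=0$, yet $\tfrac12[f_J(x+yJ)+f_J(x-yJ)]=xJ\neq -y+xJ$ for $y\neq0$. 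This is not a harmless slip, because you lean on the same even/odd identification when you sketch the regularity check for $g$ --- the step you yourself single out as the main technical obstacle --- so that verification, as described, would not go through. The repair is to drop the Splitting Lemma here entirely: setting $A(x,y)=f(x+yJ)$ and $B(x,y)=f(x-yJ)$ (both defined on $\Omega$ by axial symmetry), regularity of $f$ on $L_J$ gives $\partial_x A=-J\,\partial_y A$ and $\partial_x B=J\,\partial_y B$, and a two-line computation then shows $(\partial_x+I\partial_y)\bigl[\tfrac12(A+B)+I\tfrac{J}{2}(B-A)\bigr]=0$, exactly as in the paper's proof of Theorem \ref{GRF}. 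With that replacement your argument is complete.
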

The following generalization of the formula (\ref{distribution}) will be essential to prove the Extension Theorem, which is one of the main results of this paper.

\begin{theorem}[General Representation Formula]\label{GRF}
Let $f$ be a regular function on an axially symmetric s-domain $\Omega\subseteq
\mathbb{H}$. For any choice of $J,K\in \mathbb{S}$, $J\not= K$, the following equality holds for all $q=x+yI \in \Omega$:
\begin{equation}\label{distributionJK}
f(x+yI)
=(J-K)^{-1}[  J f(x+yJ)-Kf(x+yK)]
+I(J-K)^{-1}[ f(x+yJ)-f(x+yK)]
\end{equation}
Moreover, for all $x, y \in \mathbb{R}$ such that $x+y\mathbb{S} \subseteq \Omega$, there exist $b, c \in \mathbb{H}$ such that for all  $J,K\in \mathbb{S}$, $J\not= K$ we have
\begin{eqnarray}\label{cappaJK}
(J-K)^{-1}\Big[  J f(x+yJ)-Kf(x+yK)\Big]=b \\
\quad \quad(J-K)^{-1}\Big[ f(x+yJ)-f(x+yK)]\Big]=c.
\end{eqnarray}
\end{theorem}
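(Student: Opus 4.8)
The plan is to derive the whole statement from the Representation Formula (Theorem~\ref{formula}). By that theorem, for every $q=x+yI\in\Omega$ one can write $f(x+yI)=b+Ic$, where $b,c\in\mathbb{H}$ depend on $f$, $x$ and $y$ but \emph{not} on the imaginary unit $I$; explicitly, for any auxiliary $L\in\mathbb{S}$, $b=\frac{1}{2}[f(x+yL)+f(x-yL)]$ and $c=\frac{1}{2}L[f(x-yL)-f(x+yL)]$. Since $\Omega$ is axially symmetric, as soon as $x+yI\in\Omega$ the whole sphere $x+y\mathbb{S}$ lies in $\Omega$, so $f(x+yJ)$ and $f(x+yK)$ are defined and all the expressions in the statement make sense. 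The theorem thus reduces to recovering the quantity $b+Ic$, which is affine in $I$, from its values at the two distinct units $J$ and $K$ — a short computation in noncommutative linear algebra.

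Concretely, I would specialize $f(x+yI)=b+Ic$ to $I=J$ and to $I=K$, obtaining $f(x+yJ)=b+Jc$ and $f(x+yK)=b+Kc$. Subtracting yields $f(x+yJ)-f(x+yK)=(J-K)c$; as $J\neq K$ are purely imaginary units, $J-K$ is a nonzero quaternion, hence invertible in the division algebra $\mathbb{H}$, so $c=(J-K)^{-1}[f(x+yJ)-f(x+yK)]$ — the second identity of the ``moreover'' part. To isolate $b$ I would instead form $Jf(x+yJ)-Kf(x+yK)=Jb+J^2c-Kb-K^2c$; since $J^2=K^2=-1$ the $c$-terms cancel, leaving $Jf(x+yJ)-Kf(x+yK)=(J-K)b$, so $b=(J-K)^{-1}[Jf(x+yJ)-Kf(x+yK)]$ — the first identity. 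Because Theorem~\ref{formula} already guarantees that $b$ and $c$ do not depend on the auxiliary unit, these two expressions are automatically independent of the pair $(J,K)$, which is the last assertion of the theorem. Substituting the two expressions for $b$ and $c$ back into $f(x+yI)=b+Ic$ then gives exactly \eqref{distributionJK}.

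I do not anticipate a genuine obstacle. The only point requiring care is the noncommutative bookkeeping: one must choose the combination $Jf(x+yJ)-Kf(x+yK)$ (and not, e.g., $Kf(x+yJ)-Jf(x+yK)$) so that the $c$-terms annihilate, which works precisely because $J^2=K^2=-1$, and one must recall that $(J-K)^{-1}$ exists for all distinct $J,K\in\mathbb{S}$ since $\mathbb{H}$ is a division algebra. Alternatively, \eqref{distributionJK} could be reproved directly from the Splitting Lemma applied on $L_J$ and $L_K$ together with the Identity Principle, but reusing Theorem~\ref{formula} is shorter and keeps the two Representation Formulas logically linked.
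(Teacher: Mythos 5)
Your proof is correct, but it takes a genuinely different route from the paper. The paper proves Theorem~\ref{GRF} by defining $\phi(x+yI)$ to be the right-hand side of \eqref{distributionJK}, observing that $\phi$ agrees with $f$ on $\Omega\cap\mathbb{R}$, verifying by direct differentiation (using $\frac{\partial}{\partial x}f(x+yH)=-H\frac{\partial}{\partial y}f(x+yH)$ on $L_J$ and $L_K$) that $\overline{\partial}_I\phi=0$, and then invoking the Identity Principle; the ``moreover'' part is then deduced from \eqref{cappaI}. You instead take the full strength of Theorem~\ref{formula} as your starting point — namely that $f(x+yI)=b+Ic$ with $b,c$ independent of the unit — and solve the noncommutative linear system $f(x+yJ)=b+Jc$, $f(x+yK)=b+Kc$ for $b$ and $c$, using $J^2=K^2=-1$ to cancel the $c$-terms and the invertibility of $J-K\neq 0$ in the division algebra $\mathbb{H}$. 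Your computation is sound (I checked the cancellations, the well-definedness of all terms via axial symmetry, and the degenerate case $y=0$), and it has the virtue of making the formula look inevitable: it is exactly the interpolation of the affine function $I\mapsto b+Ic$ from its values at $J$ and $K$, and the $(J,K)$-independence of \eqref{cappaJK} comes for free. The trade-off is that your argument leans entirely on Theorem~\ref{formula}, which this paper only cites (from Theorem 2.26 of the Cauchy-kernel paper) rather than proves, whereas the paper's proof of \eqref{distributionJK} is self-contained modulo the Identity Principle and uses Theorem~\ref{formula} only for the ``moreover'' clause. Since Theorem~\ref{formula} is the special case $K=-J$ of Theorem~\ref{GRF} and is established independently elsewhere, there is no circularity in your approach; both proofs are valid.
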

\begin{proof} If $q$ is real the proof is immediate. Otherwise, for all $q=x+yI$, we define the function
\begin{eqnarray*}\label{cappaJK1}
\phi(x+yI) =(J-K)^{-1}\Big[  J f(x+yJ)-Kf(x+yK)\Big]
+I(J-K)^{-1}\Big[ f(x+yJ)-f(x+yK)\Big]
\\
= [(J-K)^{-1}J+I(J-K)^{-1}] f(x+yJ) - [(J-K)^{-1}K + I(J-K)^{-1}]f(x+yK).
\end{eqnarray*}
As we said, for all  $q \in \Omega\cap \mathbb{R}$  we have
$$
\phi(q)=\phi(x)=f(x)=f(q).
$$
Therefore if we prove that $\phi$ is slice regular on $\Omega$,
the first part of the assertion will follow from the identity principle
for slice regular functions.
Indeed, since $f$ is slice regular on $\Omega$, for any $H\in \mathbb{S}$ we have $\frac{\partial}{\partial x}f(x+yH)= -H\frac{\partial}{\partial y}f(x+yH)$
on $\Omega\cap L_{H}$; hence
$$
\frac{\partial \phi}{\partial x}(x+yI)=-[(J-K)^{-1}J+I(J-K)^{-1}]J\frac{\partial f}{\partial y} (x+yJ) + [(J-K)^{-1}K + I(J-K)^{-1}]K\frac{\partial f}{\partial y}(x+yK)
$$
and also
$$
I\frac{\partial \phi}{\partial y}(x+yI) = I [(J-K)^{-1}J+I(J-K)^{-1}] \frac{\partial f}{\partial y}(x+yJ) - I[(J-K)^{-1}K + I(J-K)^{-1}]\frac{\partial f}{\partial y}(x+yK).$$
It is at this point immediate that
$$
\Big(\frac{\partial}{\partial x}+I\frac{\partial}{\partial
y}\Big)\phi(x+yI)=0
$$
and equality (\ref{distributionJK}) is proved. The formulas (\ref{cappaJK}) and (\ref{cappaJK1}) immediately
follow from (\ref{cappaI}).
\end{proof}

We will now prove a nice geometric property for a class of domains of $\mathbb{H}$, which includes the s-domains. To this aim we first need the following

\begin{definition}\label{circolarizzato}
If $S $ is a subset of $\mathbb{H}$, then the set \begin{equation}\label{formula circolarizzato}
\widetilde{S}=\bigcup_{x+yJ\in S}(x+y\mathbb{S})
\end{equation}
is called the \emph{(axially) symmetric completion} of $S$.
\end{definition}

\begin{proposition}\label{circolarizzazionedidomini}
Let $\Omega \subseteq \mathbb{H}$ be a domain such that $\Omega\cap \mathbb{R} \neq \emptyset$. Then $\widetilde{\Omega}$ is an axially symmetric s-domain.
\end{proposition}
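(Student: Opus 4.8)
The plan is to verify, one item at a time, the conditions defining an axially symmetric s-domain (Definition~\ref{def_circular} together with the definition of s-domain): that $\widetilde{\Omega}$ is open, that it is connected, that $\widetilde{\Omega}\cap\mathbb{R}\neq\emptyset$, that $L_I\cap\widetilde{\Omega}$ is connected for every $I\in\mathbb{S}$, and that $\widetilde{\Omega}$ is axially symmetric. The organizing tool I would use is the \emph{fold map} $\pi:\mathbb{H}\to\mathbb{C}$, $\pi(q)=\mathrm{Re}(q)+i\,|\mathrm{Im}(q)|$, which is continuous, has image the closed upper half-plane $\mathbb{C}^{+}=\{a+bi:b\ge 0\}$, and whose fibres are exactly the $2$-spheres $x+y\mathbb{S}$ (a single point when $y=0$). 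The basic remark is that $w\in x+y\mathbb{S}$ if and only if $\mathrm{Re}(w)=x$ and $|\mathrm{Im}(w)|=|y|$; unwinding Definition~\ref{circolarizzato}, this gives $q\in\widetilde{\Omega}\iff\pi(q)\in\pi(\Omega)$ for every $q\in\mathbb{H}$, i.e. $\widetilde{\Omega}=\pi^{-1}(\pi(\Omega))$ is the $\pi$-saturation of $\Omega$. Two properties are then immediate: $\widetilde{\Omega}$ contains the whole fibre through each of its points, which is axial symmetry; and a fibre meets $\mathbb{R}$ only when it is a singleton, so $\widetilde{\Omega}\cap\mathbb{R}=\Omega\cap\mathbb{R}\neq\emptyset$.

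For openness I would show that $\pi$ is an open map onto $\mathbb{C}^{+}$: given $p\in\mathbb{H}$ and $\varepsilon>0$, every $w=a+bi\in\mathbb{C}^{+}$ with $|w-\pi(p)|<\varepsilon$ equals $\pi(p')$ for $p':=a+bI_p$ (with $I_p$ any imaginary unit if $p\in\mathbb{R}$), and using $|bI_p-\mathrm{Im}(p)|\ge\big|\,|\mathrm{Im}(p)|-b\,\big|$ one gets $|p'-p|^{2}\le(a-\mathrm{Re}(p))^{2}+(b-|\mathrm{Im}(p)|)^{2}=|w-\pi(p)|^{2}<\varepsilon^{2}$, so $p'\in B(p,\varepsilon)$; hence $\pi(B(p,\varepsilon))$ is a relative neighbourhood of $\pi(p)$ in $\mathbb{C}^{+}$. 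Consequently $\pi(\Omega)$ is relatively open in $\mathbb{C}^{+}$, say $\pi(\Omega)=V\cap\mathbb{C}^{+}$ with $V\subseteq\mathbb{C}$ open, and $\widetilde{\Omega}=\pi^{-1}(\pi(\Omega))=\pi^{-1}(V)$ is open in $\mathbb{H}$ by continuity. For connectedness of $\widetilde{\Omega}$, write $S_w$ for the fibre through $w$; then $\widetilde{\Omega}=\bigcup_{w\in\Omega}\big(S_w\cup\Omega\big)$, and each $S_w\cup\Omega$ is connected (a sphere, or a point, meeting the connected set $\Omega$ at $w$) and contains $\Omega$, so the union is connected.

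Finally, $L_I\cap\widetilde{\Omega}$ is open because $\widetilde{\Omega}$ is; for its connectedness, note that the restriction of $\pi$ to the closed upper half $\{x+yI:x\in\mathbb{R},\,y\ge 0\}$ of $L_I$ is just $x+yI\mapsto x+yi$, hence a homeomorphism onto $\mathbb{C}^{+}$, and similarly $\pi$ maps the closed lower half of $L_I$ homeomorphically onto $\mathbb{C}^{+}$. Therefore the two sets $\{x+yI\in\widetilde{\Omega}:y\ge 0\}$ and $\{x+yI\in\widetilde{\Omega}:y\le 0\}$ are each homeomorphic to the connected set $\pi(\Omega)$, hence connected; they overlap in $\widetilde{\Omega}\cap\mathbb{R}\neq\emptyset$, so their union $L_I\cap\widetilde{\Omega}$ is connected. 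Assembling the five items proves that $\widetilde{\Omega}$ is an axially symmetric s-domain. The step that demands the most care is the openness of $\widetilde{\Omega}$ (equivalently, that $\pi$ is open), where the non-uniqueness of $I_q$ near the real axis forces the small Euclidean estimate above; the slice-connectedness step also needs a little attention, since one must make sure the two half-planes of $L_I$ are joined only along the real slice.
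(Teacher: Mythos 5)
Your proof is correct, and it is organized quite differently from the one in the paper. The paper reduces everything to the slice-connectedness of $\widetilde\Omega\cap L_I$ and proves that by an explicit arc-lifting: given an arc $\gamma(t)=\mathrm{Re}(\gamma(t))+\mathrm{Im}(\gamma(t))$ in $\Omega$ from a preimage $p$ of $\tilde p$ to a real point $x$, it pushes it into the slice via $\gamma_I^{\pm}(t)=\mathrm{Re}(\gamma(t))\pm|\mathrm{Im}(\gamma(t))|I$, which stays in $\widetilde\Omega\cap L_I$ by the very definition of the symmetric completion. Your route instead packages the whole construction through the fold map $\pi(q)=\mathrm{Re}(q)+i|\mathrm{Im}(q)|$, identifies $\widetilde\Omega$ as the saturation $\pi^{-1}(\pi(\Omega))$, and obtains slice-connectedness by observing that each closed half of $L_I\cap\widetilde\Omega$ is homeomorphic to the connected set $\pi(\Omega)$ and that the two halves meet along $\widetilde\Omega\cap\mathbb{R}\neq\emptyset$; this is essentially the same geometric fact, seen globally rather than one arc at a time. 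What your version buys is completeness: the paper silently takes for granted that $\widetilde\Omega$ is open and connected, whereas your openness argument (via openness of $\pi$ onto $\mathbb{C}^+$) and your chain-of-connected-sets argument fill in exactly those tacit steps. Two small remarks: the inequality you quote, $|bI_p-\mathrm{Im}(p)|\ge\bigl|\,|\mathrm{Im}(p)|-b\,\bigr|$, points the wrong way for the estimate $|p'-p|\le|w-\pi(p)|$ that you draw from it; what you actually need (and what holds, with equality, since $bI_p$ and $\mathrm{Im}(p)$ are parallel multiples of $I_p$ with nonnegative coefficients) is $|bI_p-\mathrm{Im}(p)|=\bigl|\,b-|\mathrm{Im}(p)|\,\bigr|$. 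Also, your identity $\widetilde\Omega\cap\mathbb{R}=\Omega\cap\mathbb{R}$ is correct but slightly more than is needed; nonemptiness already follows from $\Omega\subseteq\widetilde\Omega$. Neither point affects the validity of the proof.
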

\begin{proof} Since $\Omega$, and hence $\widetilde \Omega$,  contains some real points, we only need to prove that, for all $I\in \mathbb{S}$, the intersection $\widetilde \Omega\cap L_I$ is  connected in $L_I$. Indeed, we will fix a real point $x\in \Omega \subseteq \widetilde \Omega$ and, for any point  $\tilde p \in \widetilde \Omega\cap L_I$, we will construct an arc in $\widetilde \Omega \cap L_I$ with endpoints $\tilde p$ and $x$ . In fact, by definition, there exist $p=Re(p)+Im(p)$  in $\Omega$ such that $\tilde p=Re(p)\pm |Im(p)|I$. Since $\Omega$ is (connected and open, and then) arcwise connected, there exists an arc $\gamma(t)=Re(\gamma(t))+Im(\gamma(t))$ connecting $p$ and $x$ in $\Omega$.  Then the arc $\gamma_I^+(t)=Re(\gamma(t))+|Im(\gamma(t))|I$ or the arc $\gamma_I^-(t)=Re(\gamma(t))-|Im(\gamma(t))|I$ connects $\tilde p$ to $x$ in $\widetilde \Omega\cap L_I$. This concludes the proof.
\end{proof}

\begin{remark}{\rm
In the statement of the last proposition, the hypothesis $\Omega\cap \mathbb{R} \neq \emptyset$ cannot be removed. In fact  $D=\{ (x+yI) : x^2+(y-2)^2 <1, I\in \mathbb{S}\}$ is an example of an axially symmetric domain of $\mathbb{H}$ that is not an s-domain. Even if we require that for any $I\in \mathbb{S}$ the intersection $\Omega \cap L_I$ is connected, the hypothesis $\Omega\cap \mathbb{R} \neq \emptyset$ remains essential. To see this, consider the hemisphere $\mathbb{S}^+=\{ x_1i+x_2j+x_3k \in \mathbb{H} :  x_1^2+x_2^2+x_3^2=1, x_3>0 \}$. The domain $E=\{ (x+yI) : x^2+(y-2)^2 <1, I\in \mathbb{S}^+\}\subset  \mathbb{H}$ is such that $E\cap L_I$ is connected for all $I\in \mathbb{S}$ but its symmetric completion $\widetilde E=\{ (x+yI) : x^2+(y-2)^2 <1, I \in \mathbb{S}\}$ is not an s-domain.}
\end{remark}

\section{Extension results}

We are going to prove a result which allows to extend any slice regular function defined on an s-domain $\Omega$ to its symmetric completion.
In this sense, we obtain that the axially symmetric s-domains are the analog of the domains of holomorphy in the complex setting. We also prove Theorem \ref{gef} dealing with the more general situation  in which $\Omega$ is not open.
In view of this result, we can extend a ``holomorphic" map $f:\Omega_D\subseteq \mathbb{C}\to \mathbb{H}$, defined on a domain symmetric with respect to the real axis,
to a regular function defined on the symmetric completion of $\Omega_D$.
These Extension Theorems and the Representation Formulas proved in the previous section will allow us to extend the validity of the results obtained in \cite{advances} from the setting of regular functions defined on balls with center at real points (and thus admitting a power series expansion) to the setting of regular functions defined on axially symmetric s-domains (on which a power series expansion is not guaranteed at all points).

\begin{theorem}[Extension Theorem]\label{circular extendability}
Let $\Omega \subseteq \mathbb{H}$ be an s-domain, and let $f: \Omega \to \mathbb{H}$ be a regular function. There exists a unique regular extension $\widetilde f: \widetilde \Omega \to \mathbb{H}$ of $f$ to the symmetric completion  $\widetilde \Omega$ of $\Omega$.
\end{theorem}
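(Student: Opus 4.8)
\medskip
\noindent\textbf{Proof plan.}
The idea is to use the General Representation Formula (Theorem \ref{GRF}) to \emph{define} a candidate extension $\widetilde f$ pointwise on $\widetilde\Omega$, and then to verify that it is well posed, regular, and restricts to $f$; uniqueness will follow softly from the Identity Principle. Observe first that $\widetilde\Omega$ is an axially symmetric s-domain by Proposition \ref{circolarizzazionedidomini} (note that $\Omega\cap\rr\neq\emptyset$ because $\Omega$ is an s-domain, and that $\widetilde\Omega\cap\rr=\Omega\cap\rr$). For uniqueness, if $g_1,g_2:\widetilde\Omega\to\hh$ are regular extensions of $f$, then $g_1-g_2$ is regular on the s-domain $\widetilde\Omega$ and vanishes on the open set $\Omega$; picking any $I\in\si$, the set $\Omega\cap L_I$ is a nonempty open subset of $L_I$, so the zero set of $g_1-g_2$ has accumulation points on $L_I$, whence $g_1\equiv g_2$ by Theorem \ref{identity principle}.

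For existence, fix $q=x+yI\in\widetilde\Omega$. If $q\in\rr$ we set $\widetilde f(q):=f(q)$. If $q\notin\rr$, set $A_{x,y}:=\{J\in\si:\ x+yJ\in\Omega\}$; by the definition of the symmetric completion this set is nonempty, and since $\Omega$ is open and $y\neq0$ it is open in $\si$, hence infinite. Choose two distinct $J,K\in A_{x,y}$ and \emph{define} $\widetilde f(x+yI)$ to be the right-hand side of (\ref{distributionJK}).

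The crux is to show this does not depend on the chosen pair $J,K\in A_{x,y}$. Near $q$ one forms, for $(x',y')$ in a small disc $D$ around $(x,y)$ chosen so that $x'+y'J,\ x'+y'K\in\Omega$ for all $(x',y')\in D$, the function
\[
\phi(x'+y'L)=(J-K)^{-1}\big[Jf(x'+y'J)-Kf(x'+y'K)\big]+L(J-K)^{-1}\big[f(x'+y'J)-f(x'+y'K)\big],\qquad L\in\si.
\]
The computation carried out in the proof of Theorem \ref{GRF} shows that $\phi$ is regular on the axially symmetric open set $W=\{x'+y'L:\ (x',y')\in D,\ L\in\si\}$, and a direct check as in the derivation of (\ref{distribution}) gives $\phi=f$ on the portions of the slices $L_J$ and $L_K$ lying in $W$; moreover $\phi$ is affine on spheres, that is $\phi(x'+y'L)=\hat a(x',y')+L\,\hat b(x',y')$ with $\hat a,\hat b$ independent of $L$. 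The fact that closes the argument is that $f$ is itself \emph{affine on spheres}: for every $(x',y')$ the map $L\mapsto f(x'+y'L)$ agrees on all of $A_{x',y'}$ with the restriction of a map of the form $\hat a(x',y')+L\,\hat b(x',y')$. This holds on the balls around the real points of $\Omega$ by Theorem \ref{formula} (formula (\ref{cappaI})), and it propagates to all of $\Omega$ by an open--closed argument on the set $\{(x',y'):\ (x'+y'\si)\cap\Omega\neq\emptyset\}$, which is connected because it is the image of the connected set $\Omega\setminus\rr$ under $q\mapsto({\rm Re}\,q,|{\rm Im}\,q|)$; the openness step uses the local regularity of $\phi$ just described, together with the identity theorem for $\hh$-valued holomorphic functions on connected open sets. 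Granting this, the right-hand side of (\ref{distributionJK}) equals $\hat a(x,y)+I\,\hat b(x,y)$ for every admissible pair, so $\widetilde f$ is well defined; near $q$ it coincides with $\phi$, hence it is regular; and since $f$ is affine on spheres, $\widetilde f=f$ on $\Omega$.

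I expect the main obstacle to be exactly this affine-on-spheres property for a general s-domain: one must exclude a monodromy phenomenon in which the affine data $(\hat a,\hat b)$, unambiguously determined near the real axis, becomes multivalued when transported through $\Omega$ to regions (e.g.\ thin tubes) that contain no full sphere. Once that is in hand, everything else is either routine (uniqueness, the restriction property) or amounts to re-running the differentiation from the proof of Theorem \ref{GRF} in a neighbourhood.
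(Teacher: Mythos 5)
Your reduction is correct as far as it goes: granted that $f$ is ``affine on spheres'' on all of $\Omega$ (i.e.\ that for each $(x,y)$ the map $L\mapsto f(x+yL)$ agrees on all of $A_{x,y}$ with some $\hat a+L\hat b$), the pointwise definition via (\ref{distributionJK}) is well posed, regular, restricts to $f$, and is unique. But that claim is essentially equivalent to the theorem itself, and the open--closed argument you propose for it does not close. The problem is the openness step: the property you propagate is a statement about \emph{all} of $A_{x',y'}$, and as $(x',y')$ moves the set $A_{x',y'}$ can acquire new connected components that were not present (even approximately) at $(x'_0,y'_0)$ --- for instance when $\Omega$ has thin tongues along different slices $L_{I_1}$, $L_{I_2}$ meeting a sphere in two disjoint caps. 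Knowing that $f$ is affine on $A_{x'_0,y'_0}$, and that your $\phi$ (built from one pair $J,K$) is regular on $W$ and agrees with $f$ on the $J$- and $K$-slices, gives no control over $f$ on such a new component: the identity theorem on a slice $L_M$ needs agreement of $f_M$ and $\phi_M$ on a set with an accumulation point inside a single connected component of $\Omega\cap W\cap L_M$, and on a far-away cap you have at best one point per sphere. This is exactly the monodromy phenomenon you flag at the end; the ``main obstacle'' you defer is the whole content of the theorem, not a technical remainder.

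The paper avoids this by continuing the extension itself rather than a property of $f$: it takes the union $D$ of balls centered at the real points of $\Omega$, lets $M$ be the largest axially symmetric s-domain with $D\subseteq M\subseteq\widetilde\Omega$ to which $f|_D$ extends, and shows $M=\widetilde\Omega$ by pushing past any boundary sphere $u+v\mathbb{S}$ of $M$. The key device is to choose the two units $J,K$ \emph{close to each other}, both inside a single ball $B(u+vJ,\delta)\subseteq\Omega$, so that the function $\widetilde g$ of (\ref{effetilde}) is defined on a full symmetric neighborhood of the sphere using purely local data from $\Omega$; consistency is then checked against $\widetilde f$ on the axially symmetric s-domain $M$ via Theorem \ref{GRF} (whose axial-symmetry hypothesis is actually available there), not against $f$ on other, possibly distant caps of $(x+y\mathbb{S})\cap\Omega$. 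The compatibility between different caps then falls out at the end ($\widetilde f\equiv f$ on $\Omega$ by the Identity Principle) instead of being needed as an input. If you wish to keep your architecture, replace the open--closed argument on spheres by this continuation over axially symmetric subdomains anchored at the real axis.
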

\begin{proof}
By construction the domain $\widetilde \Omega$ is axially symmetric and, by proposition \ref{circolarizzazionedidomini},  it is an s-domain. Moreover, the s-domain $\Omega \subseteq \widetilde \Omega$ contains an axially symmetric s-domain $D$ obtained as follows: for any $x\in \Omega\cap \mathbb{R}$ consider $\varepsilon_x>0$ such that $B(x, \varepsilon_x)\subseteq \Omega$ and define
$$
D=\bigcup_{x\in \Omega\cap \mathbb{R}}B(x, \varepsilon_x).
$$
Consider the largest axially symmetric s-domain $M$, with $D\subseteq M\subseteq \widetilde \Omega$, on which $f_{|D}$ can be extended to a regular function $\widetilde f: M \to \mathbb{H}$ and notice that, being $M$ an s-domain, the extension $\widetilde f$ is unique. Suppose $M \subsetneq \widetilde \Omega$. If this is the case, there exists  a point $p=u+vL\in \widetilde \Omega\cap \partial M$. Since $p=u+vL\in \widetilde \Omega$, we can find $J\in \mathbb{S}$ such that $u+vJ\in \Omega$. Since $\Omega$ is open, there exists $\delta > 0$ such that $B(u+vJ, \delta)\subseteq \Omega$. Therefore there is $K\in \mathbb{S}$ ($K\neq J$) close enough to $J$, and $\varepsilon >0$, so that the two discs $\Delta_J(u+vJ, \varepsilon)=\{x+yJ : (x-u)^2+(y-v)^2<\varepsilon \}$ and $\Delta_K(u+vK, \varepsilon)=\{x+yK : (x-u)^2+(y-v)^2<\varepsilon \}$ are both contained in $\Omega$. Let us define
\begin{equation}\label{effetilde}
\widetilde g (x+yI) =(J-K)^{-1}\Big[  J f(x+yJ)-Kf(x+yK)\Big]
+I(J-K)^{-1}\Big[ f(x+yJ)-f(x+yK)]\Big]
\end{equation}
on the symmetric completion $\widetilde \Delta_L(u+vL, \varepsilon)$ of $\Delta_L(u+vL, \varepsilon)$, i.e. on the set $\{x+yI : (x-u)^2+(y-v)^2<\varepsilon, I\in \mathbb{S}\}$. Theorem \ref{GRF} implies now that $\widetilde f \equiv \widetilde g$ in the circular open set $\widetilde \Delta_L(u+vL, \varepsilon)\cap M$, which is non empty due to $p=u+vL\in \partial M$. Since $\widetilde g$ is regular on $\widetilde \Delta_L(u+vL, \varepsilon)$, the function defined as
\begin{equation}
\widetilde h(q)=\left\{\begin{array}{ll}
\widetilde f(q) \textnormal{ for } q \in M\\
\widetilde g(q) \textnormal{ for } q \in \widetilde \Delta_L(u+vL, \varepsilon)
\end{array}\right.
\end{equation}
is the (unique) regular extension of $\widetilde f$ to the axially symmetric s-domain $M \cup \widetilde \Delta_L(u+vL, \varepsilon)$, which strictly contains $M$. By definition of $M$, the conclusion is that $M= \widetilde \Omega$, and $\tilde f\equiv f$ on $\Omega$ and the proof is complete.
\end{proof}

\begin{theorem}[General Extension Formula] \label{gef} For $J\in \mathbb{S}$, let $\Omega_J\subseteq L_J$ be a domain of $L_J$ such that $\Omega_J\cap \mathbb{R}\neq \emptyset$. For $K\in \mathbb{S}$ with $J\neq K$, define $\Omega_K = \{x+yK : x+yJ\in \Omega_J\}$. Let $r: \Omega_J \to \mathbb{H}$ and $s: \Omega_K \to \mathbb{H}$ be such that
\begin{eqnarray*}
(\frac{\partial }{\partial x} + J\frac{\partial }{\partial y})r(x+yJ)\equiv 0\\
(\frac{\partial }{\partial x} + K\frac{\partial }{\partial y})s(x+yK)\equiv 0\\
\end{eqnarray*}
and that $r(x)=s(x)$ for all $x\in \Omega_J\cap \mathbb{R}=\Omega_K\cap \mathbb{R}$. Then
there exists a unique regular function $\widetilde f: \widetilde \Omega_J \to \mathbb{H}$ that extends $r$ and $s$ to the symmetric completion $\widetilde\Omega_J $ of $\Omega_J$. Such $\widetilde f$ is defined by
\begin{equation}\label{ext}
\widetilde f(x+yI)
=(J-K)^{-1}[  J r(x+yJ)-Ks(x+yK)]
+I(J-K)^{-1}[ r(x+yJ)-s(x+yK)]
\end{equation}
for all $x+yI\in  \widetilde\Omega_J$.
\end{theorem}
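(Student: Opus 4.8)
The plan is to prove the theorem by verifying that the explicit map $\widetilde f$ of formula (\ref{ext}) does everything: that it is well defined and regular on $\widetilde\Omega_J$, that it restricts to $r$ and to $s$, and that it is the only regular function doing so. First, since $\Omega_J\subseteq L_J\subseteq\mathbb{H}$ is a domain meeting $\mathbb{R}$, Proposition \ref{circolarizzazionedidomini} --- whose proof goes through unchanged, connecting a point of a slice to a real point by projecting an arc of $\Omega_J$ --- shows that $\widetilde\Omega_J$ is an axially symmetric s-domain (with $\widetilde{\Omega_K}=\widetilde{\Omega_J}$). For $q=x+yI\in\widetilde\Omega_J$, normalized so that $y\ge 0$, Definition \ref{circolarizzato} produces a sign $\varepsilon=\pm1$ with $x+\varepsilon yJ\in\Omega_J$, hence $x+\varepsilon yK\in\Omega_K$, and one reads the right-hand side of (\ref{ext}) with $(J,K,y)$ replaced by $(\varepsilon J,\varepsilon K,\varepsilon y)$ --- which is (\ref{ext}) itself when $\varepsilon=1$, and which at a real point collapses, thanks to $r(x)=s(x)$, to $\widetilde f(x)=(J-K)^{-1}(J-K)r(x)=r(x)$, independently of $I$.

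The one thing that needs a genuine argument is that this value does not depend on $\varepsilon$ when both $x\pm yJ$ lie in $\Omega_J$. Here I would invoke the Splitting Lemma \ref{lemma spezzamento}: on its slice each of $r,s$ expands, near a real point $x_0$, in a convergent series $\sum_n(z-x_0)^nc_n$ with quaternionic coefficients written on the right, and $r\equiv s$ on $\mathbb{R}$ forces the coefficient sequences of $r$ and $s$ to coincide; since for real $a$ the quaternions $(a+yJ)^n\pm(a-yJ)^n$ depend only on the real and imaginary parts of $(a+iy)^n$ and not on the chosen imaginary unit, the two evaluations of (\ref{ext}) return the same quaternion. This is exactly the mechanism already behind (\ref{cappaI}), and I expect it to be the main obstacle; the remaining steps are essentially formal.

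For regularity, rewrite (\ref{ext}) as
$$\widetilde f(x+yI)=\big[(J-K)^{-1}J+I(J-K)^{-1}\big]r(x+yJ)-\big[(J-K)^{-1}K+I(J-K)^{-1}\big]s(x+yK),$$
fix $I\in\mathbb{S}$ and work on the open connected slice $L_I\cap\widetilde\Omega_J$. Since $r,s$ are $C^1$, $\widetilde f$ has continuous partials; differentiating and substituting the hypotheses $\partial_x r(x+yJ)=-J\,\partial_y r(x+yJ)$ and $\partial_x s(x+yK)=-K\,\partial_y s(x+yK)$ --- exactly as in the proof of Theorem \ref{GRF} --- one finds that the coefficients of $\partial_y r(x+yJ)$ and of $\partial_y s(x+yK)$ in $(\partial_x+I\partial_y)\widetilde f$ vanish identically, using only $I^2=J^2=K^2=-1$. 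Thus $\overline{\partial}_I\widetilde f\equiv 0$ for every $I$, so $\widetilde f$ is regular on $\widetilde\Omega_J$.

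Finally, setting $I=J$ in (\ref{ext}) and collecting terms, the coefficient of $s(x+yK)$ is $(J-K)^{-1}K+J(J-K)^{-1}$, which vanishes because a short computation with $J^2=K^2=-1$ gives $(J-K)^{-1}K=-J(J-K)^{-1}$; the coefficient of $r(x+yJ)$ is then $(J-K)^{-1}(J-K)=1$, so $\widetilde f\equiv r$ on $\Omega_J$. The mirror computation with $I=K$ gives $\widetilde f\equiv s$ on $\Omega_K$. Lastly, if $g:\widetilde\Omega_J\to\mathbb{H}$ is any regular function extending $r$, then $g$ and $\widetilde f$ coincide on $\Omega_J\subseteq L_J$, a set with accumulation points in $L_J$; since $\widetilde\Omega_J$ is an s-domain, the Identity Principle (Theorem \ref{identity principle}) yields $g\equiv\widetilde f$. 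This gives existence and uniqueness, with $\widetilde f$ given by (\ref{ext}).
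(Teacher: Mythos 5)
Your overall route is the same as the paper's: the heart of the paper's (very short) proof is precisely the observation that $\widetilde\Omega_J$ is an axially symmetric s-domain and that the computation from the proof of Theorem \ref{GRF} applies verbatim to show that the right-hand side of (\ref{ext}) is killed by $\overline{\partial}_I$ for every $I$. Your verification that $\widetilde f$ actually restricts to $r$ on $\Omega_J$ and to $s$ on $\Omega_K$ (via the identities $(J-K)^{-1}K=-J(J-K)^{-1}$ and $(J-K)^{-1}J=-K(J-K)^{-1}$, which are correct) and your uniqueness argument via the Identity Principle are both right; the paper leaves these steps implicit, so here you are simply more complete than the source.

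The one place where your proof has a genuine gap is exactly the step you single out as needing ``a genuine argument,'' namely the independence of (\ref{ext}) from the choice of sign $\varepsilon$. Writing $a(z)=(J-K)^{-1}[Jr(z)-Ks(z_K)]$ and $b(z)=(J-K)^{-1}[r(z)-s(z_K)]$ for $z\in\Omega_J$, well-definedness on the full sphere through $z$ amounts to $a(z)=a(\bar z)$ and $b(z)=-b(\bar z)$ for every $z\in\Omega_J\cap\overline{\Omega_J}$ (where $\overline{\,\cdot\,}$ is reflection in $\mathbb{R}$ inside $L_J$). Your power-series argument at a real point establishes these identities only near $\Omega_J\cap\mathbb{R}$, and the identity principle then propagates them only over the connected component of $\Omega_J\cap\overline{\Omega_J}$ that meets $\mathbb{R}$. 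If $\Omega_J$ is not symmetric with respect to the real axis, $\Omega_J\cap\overline{\Omega_J}$ can have components disjoint from $\mathbb{R}$, and there the identities can actually fail: take for $r$ a branch of $\sqrt{z^2+1}$ on $L_J$ minus two non-symmetric cuts issuing from $\pm J$, and for $s$ the corresponding continuation in $L_K$; then $\frac12[r(z)+r(\bar z)]$ and $\frac12[s(z_K)+s(\bar z_K)]$ disagree on the far components, which by (\ref{cappaI}) even obstructs the existence of any regular extension to $\widetilde\Omega_J$. So this step cannot be closed by your argument (nor by any other) without an extra hypothesis --- e.g.\ that $\Omega_J$ is symmetric with respect to $\mathbb{R}$, as is assumed in the Extension Lemma \ref{Extension Lemma} that follows, or at least that $\Omega_J\cap\overline{\Omega_J}$ is connected. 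To be fair, the paper's own proof does not address well-definedness at all, so you have identified a real weak point of the statement rather than introduced a new error; but as written your treatment of it is only local and should either be restricted to the symmetric case or flagged as requiring an additional assumption.
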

\begin{proof}
Since $\Omega_J\cap \mathbb{R}\neq \emptyset$ and $\Omega_J$ is a domain in $L_J$, the domain $\widetilde\Omega_J $ is an axially symmetric s-domain of $\mathbb{H}$. Since, by assumption,  $\frac{\partial}{\partial x}r(x+yJ)= -J\frac{\partial}{\partial y}r(x+yJ)$ and $\frac{\partial}{\partial x}s(x+yK)= -K\frac{\partial}{\partial y}s(x+yK)$, it is easy to see that the same arguments used in the proof of the first part of the statement of theorem \ref{GRF} can be used in this case and the function defined in (\ref{ext}) is regular.
\end{proof}

\begin{remark} {\rm In the last statement, the hypothesis that $\Omega_J\cap \mathbb{R}\neq \emptyset $ cannot be removed. Indeed, for $J\in \mathbb{S}$, the domain $\Omega_J=\{ (x+yJ) : x^2+(y-2)^2 <1\}$ of $L_J$ is such that $\widetilde\Omega_J $ is not an s-domain, and, as we already know, there is no hope to have a unique regular extension of $r$ and $s$ to $\widetilde \Omega$. }

\end{remark}

\begin{lemma}[Extension Lemma]\label{Extension Lemma}
Let $J\in\mathbb{S}$ and let $D$ be
a domain in $L_J$, symmetric with respect to the real axis and
such that $D\cap\mathbb{R}\not=\emptyset$. Let $\Omega_D$ be the
axially symmetric s-domain defined by
$$
\Omega_D=\bigcup_{x+yJ\in D}(x+y\mathbb{S}).
$$
If $f:\ D\to L_J$ is holomorphic, then the function $\tilde f:\
\Omega_D\to\mathbb{H}$ defined by
\begin{equation}\label{distribution_two}
\tilde f(x+yI) =\frac{1}{2}\Big[   f(x+yJ)+f(x-yJ)\Big]
+I\frac{1}{2}\Big[ J[f(x-yJ)-f(x+yJ)]\Big]
\end{equation}
is the unique regular, infinitely differentiable extension of
$f$ to $\Omega_D$.
Similarly, if the functions $F, G:\ D\to L_J$ are holomorphic, if $K\in \mathbb{S}$ is such that $K\perp J$ and if the function $f:\ D\to \mathbb{H}$ is defined by $f=F+GK$, then the function $\tilde f$ obtained by extending $F$ and $G$ is the unique regular,
infinitely differentiable extension of
$f$ to $\Omega_D$.
This unique extension of $f$ will be denoted by
${\rm ext}(f)$.
\end{lemma}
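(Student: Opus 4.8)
The plan is to establish three things in sequence: first that $\tilde f$ as defined in (\ref{distribution_two}) is regular on $\Omega_D$, second that it restricts to $f$ on $D$, and third that it is the \emph{unique} such regular extension. The first claim follows by repeating verbatim the differential computation from the proof of Theorem \ref{GRF} (or of Theorem \ref{formula}): since $f:D\to L_J$ is holomorphic, we have $\frac{\partial}{\partial x}f(x+yJ) = -J\frac{\partial}{\partial y}f(x+yJ)$ on $D$, and the analogous identity with $y$ replaced by $-y$ gives $\frac{\partial}{\partial x}f(x-yJ) = J\frac{\partial}{\partial y}f(x-yJ)$ (the sign flips because $\partial/\partial y$ acting on $f(x-yJ)$ produces a $-J$). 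Plugging these into $\big(\frac{\partial}{\partial x} + I\frac{\partial}{\partial y}\big)\tilde f(x+yI)$ and using that $I$ is an arbitrary but fixed imaginary unit on each slice, all terms cancel and $\tilde f$ is regular on $\Omega_D$. Note also that $\tilde f$ is infinitely differentiable because $f$ is (holomorphic functions being real-analytic), and the formula (\ref{distribution_two}) expresses $\tilde f$ as a smooth combination of compositions of $f$ with smooth maps.

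\emph{Next I would} check that $\tilde f$ genuinely extends $f$: for a point $x+yJ \in D$ we must recover $\tilde f(x+yJ) = f(x+yJ)$, and this is the special case $I=J$ of (\ref{distribution_two}), where the right-hand side collapses to $\frac{1}{2}[f(x+yJ)+f(x-yJ)] + J\frac{1}{2}[J f(x-yJ) - J f(x+yJ)] = \frac{1}{2}[f(x+yJ)+f(x-yJ)] - \frac{1}{2}[f(x-yJ)-f(x+yJ)] = f(x+yJ)$; similarly $I=-J$ recovers $f(x-yJ)$, consistently with $D$ being symmetric about the real axis. \emph{For uniqueness}, I would invoke the Identity Principle (Theorem \ref{identity principle}): $\Omega_D$ is an axially symmetric s-domain (it contains real points since $D\cap\mathbb{R}\neq\emptyset$, and slice-connectedness follows as in Proposition \ref{circolarizzazionedidomini}), so any two regular extensions agree on $D\cap L_J \supseteq D\cap\mathbb{R}$, which has an accumulation point, forcing them to coincide on all of $\Omega_D$.

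\emph{For the second half} of the statement, I would reduce to the first half. Given holomorphic $F,G:D\to L_J$ and $K\in\mathbb{S}$ with $K\perp J$, write $f=F+GK$, apply the already-proven part separately to $F$ and to $G$ to obtain regular extensions $\widetilde F, \widetilde G:\Omega_D\to\mathbb{H}$, and set $\tilde f := \widetilde F + \widetilde G\, K$. A sum of regular functions is regular (the operator $\overline{\partial}_I$ is $\mathbb{R}$-linear on the left), and right-multiplication by the constant $K$ also preserves regularity, since $\overline{\partial}_I\big(g(x+yI)K\big) = \big(\overline{\partial}_I g(x+yI)\big)K = 0$; hence $\tilde f$ is regular on $\Omega_D$. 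On $D$ it restricts to $F + GK = f$, and its infinite differentiability is inherited from $\widetilde F$ and $\widetilde G$. Uniqueness again follows from the Identity Principle exactly as before.

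\emph{The one subtle point} — and the step I expect to need the most care — is the sign bookkeeping in the regularity computation for $\tilde f$: one must track how $\frac{\partial}{\partial y}$ interacts with the two arguments $x+yJ$ and $x-yJ$, and simultaneously observe that although $\tilde f$ is written using the fixed unit $J$, the defining PDE must hold for the \emph{variable} unit $I$ on each slice $L_I$. The cleanest way around this is to observe that the right-hand side of (\ref{distribution_two}) is exactly the expression $\phi$ appearing in the proof of Theorem \ref{GRF} specialized to the conjugate pair $(J,-J)$ — equivalently, it is the content of Theorem \ref{formula} read in reverse — so the regularity of $\tilde f$ is not a new verification at all but a direct citation of that earlier argument, applied now to a holomorphic datum on $D$ rather than to the restriction of an already-regular function. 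All remaining assertions (extension property, smoothness, uniqueness via the Identity Principle) are then routine.
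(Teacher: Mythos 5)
Your proposal is correct and follows essentially the same route as the paper: the paper's proof simply cites the regularity computation from Theorem \ref{formula}, checks the case $I=J$ to see that $\tilde f$ extends $f$, invokes the Identity Principle for uniqueness, and declares the second half (extending $F$ and $G$ separately and recombining with $K$) ``completely analogous''. You have merely written out in full the sign bookkeeping and the closure of regularity under sums and right multiplication by a constant that the paper leaves implicit.
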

\begin{proof}
The regularity of $\tilde f$ follows directly by the proof of
Theorem \ref{formula}. When $I=J$ we have that $\tilde f(x+yJ)=
f(x+yJ)$, and hence $\tilde f$ is the unique extension of $f$ by
the Identity Principle. Moreover, $\tilde f$ is infinitely
differentiable on the
axially symmetric s-domain $\Omega_D$. The second part of the statement has a completely analogous proof. \end{proof}

\section{Regular functions on axially symmetric slice domains}

We will show in this section that some of the basic results obtained in \cite{advances} are valid in the more general case in which the domains of definition of regular functions are axially symmetric s-domains. To this purpose, we will extend to such domains the $*$-product introduced in \cite{caterina} in the case of regular power series converging on balls centered at real points. We begin with the following:
\begin{proposition}\label{zeriolo} Let
$\Omega\subseteq \mathbb{H}$  be an axially symmetric s-domain and let $f :
\Omega \to \mathbb{H}$ be a regular function. Suppose that
there exists a imaginary unit $J\in $ $\mathbb{S}$ such that
$f(L_J)\subset L_J$. If there exists a imaginary unit $I\in
\mathbb{S}$ such that $I \notin L_{J}$ and that
$f(x_{0}+y_{0}I)=0$, then $f(x_{0}+y_{0}L)=0$ for all $L\in
\mathbb{S}$.
\end{proposition}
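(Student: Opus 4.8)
The plan is to apply the Representation Formula (Theorem~\ref{formula}) to the $2$-sphere $x_0+y_0\mathbb{S}$ — which is contained in $\Omega$ because $\Omega$ is axially symmetric and $x_0+y_0I\in\Omega$ — using precisely the distinguished imaginary unit $J$ for which $f(L_J)\subseteq L_J$. First I would dispose of the trivial case $y_0=0$: then $x_0+y_0L=x_0$ for every $L\in\mathbb{S}$ and there is nothing to prove, so I may assume $y_0\neq 0$.

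By Theorem~\ref{formula} there are quaternions
$b=\frac12\big[f(x_0+y_0J)+f(x_0-y_0J)\big]$ and
$c=\frac12 J\big[f(x_0-y_0J)-f(x_0+y_0J)\big]$
such that $f(x_0+y_0L)=b+Lc$ for every $L\in\mathbb{S}$. The key observation is that, since $x_0\pm y_0J\in\Omega\cap L_J$, the hypothesis $f(L_J)\subseteq L_J$ forces $f(x_0+y_0J)$ and $f(x_0-y_0J)$ to lie in $L_J$; as $L_J=\mathrm{span}_{\mathbb{R}}\{1,J\}$ is a commutative subfield of $\mathbb{H}$ isomorphic to $\mathbb{C}$ — closed under sums, products and inverses of nonzero elements, and containing $J$ — it follows that both $b\in L_J$ and $c\in L_J$.

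Now I would bring in the vanishing hypothesis $f(x_0+y_0I)=0$, which reads $b+Ic=0$, i.e. $Ic=-b$. Suppose, for a contradiction, that $c\neq 0$. Then $c$ is invertible with $c^{-1}=\bar c/|c|^2\in L_J$, and multiplying $Ic=-b$ on the right by $c^{-1}$ gives $I=-bc^{-1}$, which lies in $L_J$ because $L_J$ is closed under multiplication — contradicting $I\notin L_J$. Hence $c=0$, and then $b=-Ic=0$ as well, so $f(x_0+y_0L)=b+Lc=0$ for every $L\in\mathbb{S}$, which is the assertion.

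I do not expect a genuine obstacle here; the argument is essentially one application of the Representation Formula followed by two lines of algebra. The only points requiring a little care are the bookkeeping that $b,c\in L_J$ (one must not forget that $J\in L_J$ when handling $c$) and the elementary but decisive fact that $L_J$, being a copy of $\mathbb{C}$ sitting inside $\mathbb{H}$, is closed under inverses — this is exactly what turns $Ic=-b$ into the contradiction $I\in L_J$. Alternatively one could fix a quaternionic basis $1,J,K',JK'$ with $K'\perp J$, write $I=\alpha J+\beta K'+\gamma JK'$ with $(\beta,\gamma)\neq(0,0)$ since $I\notin L_J$, and compare the $K'$- and $JK'$-components of $Ic=-b$ to obtain a $2\times2$ linear system of determinant $-(\beta^2+\gamma^2)\neq0$ that forces $c=0$; but the field-theoretic route above is shorter and I would adopt it.
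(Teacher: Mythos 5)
Your proof is correct and follows essentially the same route as the paper: apply the Representation Formula with the distinguished unit $J$, observe that the two coefficients $b,c$ lie in $L_J$, and use the fact that $1$ and $I$ are linearly independent over $L_J$ (which you justify explicitly via $I=-bc^{-1}\in L_J$, a detail the paper merely asserts) to conclude $b=c=0$ and hence $f(x_0+y_0L)=0$ for all $L$. No discrepancies worth noting.
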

\begin{proof}
The only case we have to consider is $y_0\not= 0$. By formula
(\ref{formula}) we have on $L_{I}\cap\Omega$
$$
f(x+yI) =\frac{1}{2}\Big[   f(x+yJ)+f(x-yJ)\Big]
+I\frac{1}{2}\Big[ J[f(x-yJ)-f(x+yJ)]\Big],
$$
in particular $f(x_0+y_{0}I)=0$ implies that
$$
0 =\frac{1}{2}\Big[   f(x_0+y_0J)+f(x_0-y_0J)\Big]
+I\frac{1}{2}\Big[ J[f(x_0-y_0J)-f(x_0+y_0J)]\Big],
$$
and since $f(L_J)\subset L_J$ and $1$ and $I$ are linearly
independent on $L_J$ we get
$$
f(x_0+y_0J)=-f(x_0-y_0J)\qquad\qquad f(x_0-y_0J)=f(x_0+y_0J)
$$
so $f(x_0-y_0J)=f(x_0+y_0J)=0$. The conclusion follows
from the Representation Formula. \end{proof}

%Another property which can be generalized to the setting of regular functions defined on circular s-domains is the following characterization of functions taking a plane $L_J$ to itself:
%\begin{proposition}
%Let $\Omega\subseteq \mathbb{H}$  be a circular s-domain and let
%$f : \Omega \to \mathbb{H}$ be a regular function. Let
%$f\not\equiv 0$, and let $J\in \mathbb{S}$ be such that
%$f(L_J)\subset L_J$. Then, for any  $I\in \mathbb{S},$ we have
%that $f(L_I)\not\subset (L_J)^{\perp}.$
%\end{proposition}
%\begin{proof}
%By assumption we have $f(L_J)\subset L_J$, so we can write $f_J(x+yJ)=F_1(x+yJ)+F_2(x+yJ)J$ (where $F_1$, $F_2$ are suitable
%real valued functions). Using the formula (\ref{formula}), we obtain
%$$
%f(x+yI) =\frac{1}{2}\Big[
%F_1(x+yJ)+F_2(x+yJ)J+F_1(x-yJ)+F_2(x-yJ)J \Big]
%$$
%$$
%+I\frac{1}{2}\Big[ J[F_1(x-yJ)+F_2(x-yJ)J-F_1(x+yJ)-F_2(x+yJ)J
%]\Big].
%$$
%When $I\not= J$, the set $1,I,J, I\times J$ forms a
%basis of $\mathbb{H}$. If we suppose that $f(L_I)\subset (L_J)^{\perp}$ then the
%components of $f(x+yI)$  along $I$ and $J$ would both be zero in
%that basis, that is
%$$
%F_2(x-yJ)-F_2(x+yJ)=0\quad {\rm and}\quad F_2(x+yJ)+F_2(x-yJ)=0
%$$
%on $\Omega\cap L_J$. The conclusion follows. \end{proof}

This last result has a consequence on the zero set of
holomorphic functions defined on domains intersecting the real
axis and symmetric with respect to it. Since,  by the Extension Lemma
\ref{Extension Lemma}, any holomorphic
function $f$ can be uniquely extended  to a regular function defined over quaternions,
one may ask which zeros of $f$ will remain
isolated after the extension, and which will become ``spherical''.
The answer to this natural question is given in the following Proposition, whose proof is an immediate consequence of the Representation Formula \ref{formula} combined with the Extension Lemma \ref{Extension Lemma}.
\begin{proposition}
Let $J\in\mathbb{S}$ and let $\Omega$ be a domain in $L_J$, symmetric
with respect to the real axis and such that
$\Omega\cap\mathbb{R}\not=\emptyset$. Let $\widetilde{\Omega}$ be its symmetric
completion.
Let $f:\ \Omega\to L_J$ be a holomorphic function and $\tilde f:\
\widetilde{\Omega}\to\mathbb{H}$ be its regular extension. If $\tilde
f(x_{0}+y_{0}J)=0$, for $y_0\not= 0$, then the zero
$(x_{0}+y_{0}J)$ of $\tilde f$ is not isolated, or equivalently
$\tilde f(x_{0}+y_{0}L)=0$ for all $L\in \mathbb{S}$, if, and only
if,
\begin{equation}\label{condition}
f(x_{0}+y_{0}J)=f(x_{0}-y_{0}J)=0.
\end{equation}
\end{proposition}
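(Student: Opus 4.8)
The plan is to translate the geometric statement ``$x_0+y_0J$ is not isolated in $Z_{\tilde f}$'' into the two conditions of equation~(\ref{condition}) by direct application of the Representation Formula~\ref{formula} to the regular extension $\tilde f$, and then to run the argument in both directions. First I would record the basic setup: by the Extension Lemma~\ref{Extension Lemma} the holomorphic $f:\Omega\to L_J$ extends uniquely to a regular $\tilde f:\widetilde\Omega\to\mathbb H$, given on all of $\widetilde\Omega$ by formula~(\ref{distribution_two}) with the chosen $J$; in particular $\tilde f$ agrees with $f$ on $\Omega\cap L_J$, so $\tilde f(x_0\pm y_0 J)=f(x_0\pm y_0 J)$. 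Also, since $\Omega$ is symmetric with respect to the real axis, both $x_0+y_0J$ and $x_0-y_0J$ lie in $\Omega$, so the right-hand side of~(\ref{condition}) makes sense.

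For the direction ``$(\Leftarrow)$'', suppose $f(x_0+y_0J)=f(x_0-y_0J)=0$. Apply the Representation Formula~\ref{formula} to $\tilde f$ on $\Omega$: for every $I\in\mathbb S$,
\[
\tilde f(x_0+y_0I)=\tfrac12\big[\tilde f(x_0+y_0J)+\tilde f(x_0-y_0J)\big]+I\tfrac12\big[J(\tilde f(x_0-y_0J)-\tilde f(x_0+y_0J))\big].
\]
Since both bracketed terms vanish by hypothesis, $\tilde f(x_0+y_0I)=0$ for all $I\in\mathbb S$; that is, the whole sphere $x_0+y_0\mathbb S$ is contained in $Z_{\tilde f}$, so the zero $x_0+y_0J$ is certainly not isolated. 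For the direction ``$(\Rightarrow)$'', I would actually prove the stronger equivalent form stated in the proposition: if $\tilde f(x_0+y_0L)=0$ for all $L\in\mathbb S$, then in particular $\tilde f(x_0+y_0J)=\tilde f(x_0-y_0J)=\tilde f(x_0-y_0(-J))=0$, wait---more carefully, taking $L=J$ gives $f(x_0+y_0J)=0$ and taking $L=-J$ (note $-J\in\mathbb S$) gives $\tilde f(x_0-y_0J)=\tilde f(x_0+y_0(-J))=0$, i.e. $f(x_0-y_0J)=0$, which is exactly~(\ref{condition}).

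The one remaining point, and the only place any thought is needed, is to justify that ``not isolated'' is genuinely equivalent to ``the whole sphere is a zero set'', so that the proof above of the two directions suffices. For this I would invoke Proposition~\ref{zeriolo}: $\tilde f$ is regular on the axially symmetric s-domain $\widetilde\Omega$, and $\tilde f(L_J)=f(\Omega)\subseteq L_J$, so the hypothesis $f(L_J)\subset L_J$ of Proposition~\ref{zeriolo} holds. If $x_0+y_0J$ is a non-isolated zero with $y_0\neq 0$, then there is a sequence of distinct zeros accumulating at it; by the Identity Principle~\ref{identity principle} applied on the slice $L_J$ this would force $\tilde f\equiv 0$ unless the accumulating zeros leave $L_J$, hence there is a zero $x+yI$ with $I\notin L_J$ arbitrarily close to $x_0+y_0J$, and a small additional argument (continuity of $\tilde f$ together with Proposition~\ref{zeriolo} applied at that nearby point, or directly the spherical structure of zero sets) yields a point of the form $x_0+y_0I$ with $I\notin L_J$ that is a zero; Proposition~\ref{zeriolo} then gives $\tilde f(x_0+y_0L)=0$ for all $L\in\mathbb S$. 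Thus non-isolated $\iff$ spherical, and combined with the two displayed computations this closes the proof. I expect this equivalence between ``non-isolated'' and ``spherical'' to be the main (and essentially only) obstacle; everything else is a one-line substitution into~(\ref{distribution_two}) or~(\ref{distribution}).
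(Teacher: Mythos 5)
Your proof is correct and follows essentially the same route as the paper: the Representation Formula gives that condition~(\ref{condition}) forces $\tilde f$ to vanish on the whole sphere $x_0+y_0\mathbb{S}$, and taking $L=\pm J$ gives the converse, which the paper likewise dismisses as immediate. The paper does not attempt to justify the equivalence between ``not isolated'' and ``spherical'' (that equivalence is simply asserted in the statement itself), so your final paragraph goes beyond the paper's proof; be aware that its key step --- passing from a zero $x+yI$ off the slice $L_J$ near $x_0+y_0J$ to a zero exactly of the form $x_0+y_0I$ with $I\notin L_J$, as required by Proposition~\ref{zeriolo} --- is only sketched and would need a genuine argument (e.g.\ the structure of zero sets of regular functions) to be complete.
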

\begin{proof}
If the condition (\ref{condition}) is satisfied, then by the Representation Formula, the value of $\tilde f(x_{0}+y_{0}L)$ is zero for all $L\in \mathbb{S}$. The converse of the statement is immediate.
\end{proof}

We now  turn our attention to the problem of defining a suitable product of two regular functions.
As it is well known, the product of two regular functions is
not, in general, regular. In the case of regular polynomials,
i.e. polynomials with quaternionic coefficients on the right, one
can exploit the standard multiplication of polynomials in a skew
field (see e.g. \cite{lam}). This
product extends naturally to a product of regular power series on their domain of convergence, see \cite{caterina}, as
\begin{equation}\label{*perserie}
\Big(\sum_{n\geq 0} q^n a_n\Big) * \Big(\sum_{n\geq 0} q^n
b_n\Big)=\sum_{n\geq 0} q^n  \Big(\sum_{r=0}^na_rb_{n-r}\Big),\qquad(a_r,b_r\in\mathbb{H}).
\end{equation}
Our next goal is to generalize the regular product in the case of regular functions defined on axially symmetric s-domains. Let $\Omega\subseteq\mathbb{H}$ be such
a domain and let
 $f,g:\ \Omega\to\mathbb{H}$ be regular functions. For any $I,J\in\mathbb{S}$, with
 $I\perp J$, the Splitting Lemma guarantees the existence of four holomorphic functions
 $F,G,H,K: \ L_I\cap\Omega \to L_I$ such
 that for all $z=x+yI\in  L_I\cap\Omega$
 $$
 f_I(z)=F(z)+G(z)J \qquad g_I(z)=H(z)+K(z)J.
 $$
We define the function $f_I*g_I:\ L_I\cap\Omega \to L_I$ as
\begin{equation}\label{f*g}
f_I*g_I(z)=[F(z)H(z)-G(z)\overline{K(\bar
z)}]+[F(z)K(z)+G(z)\overline{H(\bar z)}]J.
\end{equation}
Then  $f_I*g_I(z)$ is obviously holomorphic and hence its unique
regular extension to $\Omega$ defined, according to the
Extension Lemma \ref{Extension Lemma},
 by
$$
f*g(q)={\rm ext}(f_I*g_I)(q),
$$
 is regular on $\Omega$.
 \begin{definition}
Let $\Omega\subseteq\mathbb{H}$ be an axially symmetric s-domain and let
 $f,g:\ \Omega\to\mathbb{H}$  be regular. The function
$$f*g(q)={\rm ext}(f_I*g_I)(q)$$ defined as the extension of (\ref{f*g})
is called the regular product of $f$ and $g$. This product is
called $*$-product or regular product.
 \end{definition}
\begin{remark}{\rm
It is immediate to verify that the $*$-product is associative,
distributive but, in general, not commutative.}
\end{remark}
\begin{remark}\label{J*H}{\rm
Let $H(z)$ be a holomorphic function in the variable $z\in L_I$
and let $J\in\mathbb{S}$ be an imaginary unit orthogonal to $I$. Then by the
definition of $*$-product we obtain $J*H(z)=\overline{H(\bar
z)}J$.}
\end{remark}
As it can be easily verified, given a regular function, its reciprocal with respect to the standard product is not in general regular, indeed the function $f(q)=q-j$ is regular but its reciprocal $f(q)^{-1}=(x-j)^{-1}$ is not.
One may wonder if it is possible to introduce a notion of regular reciprocal using the $*$-product. The answer, as we will see,  is positive and requires a couple of additional notions which are of independent interest. The idea underlying the notion of regular reciprocal is based on the standard idea to construct the reciprocal of a quaternion. Given $q\in\hh$, its reciprocal is given by $q^{-1}=\bar{q}/|q|^2$. The crucial fact is that the denominator is a real number which is  obtained as $\bar{q}q=q\bar{q}$. By defining a suitable
notion of conjugate of a function we will show how to find the regular reciprocal of a regular function.
Let $\Omega\subseteq\mathbb{H}$ be an axially symmetric s-domain and let
 $f:\ \Omega\to\mathbb{H}$ be a regular function. For any $I,J\in\mathbb{S}$, with
 $I\perp J$, the Splitting Lemma guarantees the existence of two holomorphic functions
 $F,G: \ L_I\cap\Omega \to L_I$ such
 that for all $z=x+yI\in  L_I\cap\Omega$
 $$
 f_I(z)=F(z)+G(z)J.
 $$
Let us define the function $f_I^c:\ L_I\cap\Omega \to L_I$ as
\begin{equation}\label{f^c}
f_I^c(z)=\overline{F(\bar z)}-G(z)J.
\end{equation}
Then  $f_I^c(z)$ is obviously holomorphic and hence its unique
regular extension to $\Omega$ defined, according to the
Extension Lemma \ref{Extension Lemma},
 by
$$
f^c(q)={\rm ext}(f_I^c)(q),
$$
 is regular on $\Omega$.
 \begin{definition}
Let $\Omega\subseteq\mathbb{H}$ be an axially symmetric s-domain and let
 $f:\ \Omega\to\mathbb{H}$  be regular. The function
$$f^c(q)={\rm ext}(f_I^c)(q)$$ defined by the extension (\ref{f^c}) is called the regular conjugate of $f$.
 \end{definition}
Since any axially symmetric s-domain contains a ball with center at a real point, the notion of regular conjugate given in \cite{caterina} by means of the power series expansion of a regular function $f$ can be used at least locally. In fact these two notions coincide, as it is proved in the next result:
\begin{proposition}
Let $f:\ B(p_0,R)\to\mathbb{H}$ be a regular function on an
open ball in $\mathbb{H}$ centered at a real point $p_0$. If
$f(q)=\sum_{n\geq 0} q^n a_n$ then $f^c(q)=\sum_{n\geq 0} q^n
\bar{a}_n$.
\end{proposition}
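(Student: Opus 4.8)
The plan is to unwind the definition of $f^c$ through the Splitting Lemma and then recognize the resulting holomorphic restriction as the restriction of $\sum_{n\geq 0}q^n\overline{a_n}$, concluding by uniqueness of the regular extension. First I would fix $I,J\in\mathbb{S}$ with $I\perp J$, so that $1,I,J,IJ$ is a standard orthonormal basis of $\mathbb{H}$ and in particular $\overline{I}=-I$, $\overline{J}=-J$, $\overline{IJ}=-IJ$. Decomposing each coefficient uniquely as $a_n=\alpha_n+\beta_n J$ with $\alpha_n,\beta_n\in L_I$, and using that $z^n\in L_I$ commutes with $\alpha_n$ and $\beta_n$, the restriction of $f$ to $L_I$ becomes, for $z=x+yI$,
\[
f_I(z)=\sum_{n\geq 0}z^n\alpha_n+\Big(\sum_{n\geq 0}z^n\beta_n\Big)J=:F(z)+G(z)J,
\]
which identifies the holomorphic pair $(F,G)$ furnished by the Splitting Lemma (the two series converge on $L_I\cap B(p_0,R)$ since $|\alpha_n|,|\beta_n|\le|a_n|$).

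Next I would substitute into the defining formula $f_I^c(z)=\overline{F(\bar z)}-G(z)J$. Since $L_I$ is commutative and its quaternionic conjugation coincides with the ordinary complex conjugation of $L_I\cong\mathbb{C}$, one has $\overline{F(\bar z)}=\sum_{n\geq 0}\overline{\bar z^{\,n}\alpha_n}=\sum_{n\geq 0}z^n\overline{\alpha_n}$, while $G(z)J=\big(\sum_{n\geq 0}z^n\beta_n\big)J$; hence
\[
f_I^c(z)=\sum_{n\geq 0}z^n\big(\overline{\alpha_n}-\beta_n J\big).
\]
The key step is the purely algebraic identity $\overline{\alpha_n}-\beta_n J=\overline{a_n}$: writing $\alpha_n=s_n+t_nI$ and $\beta_n=u_n+v_nI$ with $s_n,t_n,u_n,v_n\in\mathbb{R}$, we get $a_n=s_n+t_nI+u_nJ+v_nIJ$, and therefore $\overline{a_n}=s_n-t_nI-u_nJ-v_nIJ=\overline{\alpha_n}-\beta_n J$. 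Thus $f_I^c(z)=\sum_{n\geq 0}z^n\overline{a_n}$.

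Finally, since $|\overline{a_n}|=|a_n|$, the series $g(q):=\sum_{n\geq 0}q^n\overline{a_n}$ converges on $B(p_0,R)$ and defines there a regular function whose restriction to $L_I$ is precisely $\sum_{n\geq 0}z^n\overline{a_n}=f_I^c(z)$. As $B(p_0,R)$ is an axially symmetric s-domain and $f^c={\rm ext}(f_I^c)$ is, by the Extension Lemma, the \emph{unique} regular extension of $f_I^c$ to $B(p_0,R)$, we conclude $f^c=g$, i.e.\ $f^c(q)=\sum_{n\geq 0}q^n\overline{a_n}$. I do not expect a genuine obstacle here: the only care needed is in keeping apart the two conjugations (quaternionic versus the one internal to $L_I$) and in checking the sign in $\overline{\alpha_n}-\beta_n J=\overline{a_n}$; convergence of all auxiliary series is immediate from $|\alpha_n|,|\beta_n|,|\overline{a_n}|\le|a_n|$.
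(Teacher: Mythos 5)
Your proof is correct and follows essentially the same route as the paper's: split $f_I=F+GJ$, compute $f_I^c(z)=\overline{F(\bar z)}-G(z)J$ coefficientwise using the identity $\overline{\alpha+\beta J}=\overline{\alpha}-\beta J$ for $\alpha,\beta\in L_I$, and conclude by uniqueness of the regular extension. The only cosmetic difference is that the paper reads off the coefficients of $F$ and $G$ from the derivatives $\partial_s^n f(p_0)$, whereas you decompose the given coefficients $a_n=\alpha_n+\beta_n J$ directly and verify the key conjugation identity in real coordinates.
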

\begin{proof} We will suppose without loss of generality that $p_0=0$.
By Proposition \ref{serie_domini}, given any $I\in\mathbb{S}$, the
coefficients of the power series expansion of $f$ can be obtained as
the coefficients of the power series of $f_I$. For all
$z=x+yI\in L_I\cap B(0,R)$ and $J\in\mathbb{S}$ with  $J\perp I$
we get
$$
f_I(z)=F(z)+G(z)J=\sum_{n\geq 0}z^n
\frac{1}{n!}\Big(\frac{\partial F}{\partial x}(0) +\frac{\partial
G}{\partial x}(0)J\Big)=\sum_{n\geq 0}z^n \frac{1}{n!}\partial_s
f(0)
$$
and hence
$$
f^c_I(z)=\overline{F(\bar z)}-G(z)J=\sum_{n\geq 0}z^n \frac{1}{n!}
\Big(\overline{\frac{\partial F}{\partial x}}(0) -\frac{\partial
G}{\partial x}(0)J\Big)=\sum_{n\geq 0}z^n
\frac{1}{n!}\overline{\partial_s f(0)}
$$
which proves the statement.
\end{proof}
\begin{remark}{\rm
When dealing with regular power series defined on open balls in
$\mathbb{H}$ (centered at real points), the natural definition of
regular conjugate is the one suggested in the Proposition above
(see \cite{caterina}).}
\end{remark}
Using the notion of $*$-multiplication of regular functions, it
is possible to associate to any regular function $f$ its
``symmetrization'', denoted by $f^s$. The symmetrization of a regular function will be used in the sequel to introduce the announced regular reciprocal of a function $f$.

Let $\Omega\subseteq\mathbb{H}$ be an axially symmetric s-domain and let
 $f:\ \Omega\to\mathbb{H}$ be a regular function. Using, as before, the Splitting Lemma we can write
 $$
 f_I(z)=F(z)+G(z)J,
 $$
 with
$F,G: \ L_I\cap\Omega \to L_I$  holomorphic functions.
We define the function $f^s:\ L_I\cap\Omega \to L_I$ as
\begin{equation}\label{f^s}
f^s_I=f_I*f^c_I=(F(z)+G(z)J)*(\overline{F(\bar z)}-G(z)J)
\end{equation}
$$
=[F(z)\overline{F(\bar z)}+G(z)\overline{G(\bar
z)}]+[-F(z)G(z)+G(z)F(z)]J
$$
$$
=F(z)\overline{F(\bar z)}+G(z)\overline{G(\bar z)}=f^c_I*f_I.
$$
Then  $f_I^s$ is obviously holomorphic and hence its unique
regular extension to $\Omega$ defined by
$$
f^s(q)={\rm ext}(f^s_I)(q)
$$
is regular.
\begin{definition}
Let $\Omega\subseteq\mathbb{H}$ be an axially symmetric s-domain and let
 $f:\ \Omega\to\mathbb{H}$  be regular. The function
$$f^s(q)={\rm ext}(f_I^s)(q)$$ defined by the extension of (\ref{f^s}) is called the symmetrization of $f$.
\end{definition}
\begin{remark}\label{symmLI}
Notice that formula (\ref{f^s}) yields that, for all
$I\in\mathbb{S}$, $f^s(L_I\cap \Omega)\subseteq L_I$.
\end{remark}
 The symmetrization process well behaves with respect to the $*$-product and the conjugation:
\begin{proposition}\label{propmoltiplicative}
Let $\Omega\subseteq\mathbb{H}$ be an axially symmetric s-domain and let
 $f,g:\ \Omega\to\mathbb{H}$ be regular functions.
Then $(f*g)^c = g^c*f^c$ and
\begin{equation}
(f*g)^s = f^s g^s = g^s f^s.
\end{equation}
\end{proposition}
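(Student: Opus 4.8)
The plan is to reduce each identity to a statement about a single slice $L_I\cap\Omega$ and then to invoke the uniqueness of the regular extension. Indeed, every function occurring in the statement is either obtained from the Splitting Lemma on a slice and then extended, or is a $*$-product or symmetrization built out of such functions; hence by the Extension Lemma \ref{Extension Lemma} (equivalently, by the Identity Principle \ref{identity principle}, since an axially symmetric s-domain meets $\mathbb{R}$) two such functions coincide on $\Omega$ as soon as they coincide on $L_I\cap\Omega$ for one fixed $I$. So I fix $I,J\in\mathbb{S}$ with $I\perp J$ and write, via the Splitting Lemma,
$$f_I(z)=F(z)+G(z)J,\qquad g_I(z)=H(z)+K(z)J$$
on $L_I\cap\Omega$, with $F,G,H,K\colon L_I\cap\Omega\to L_I$ holomorphic. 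The only algebraic facts used below are that multiplication inside $L_I$ is commutative and that $w\mapsto\bar w$ is an involutive automorphism of the field $L_I$.

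For the identity $(f*g)^c=g^c*f^c$ I would compute the two splitting functions and match them. By (\ref{f*g}) the splitting of $f*g$ on $L_I$ has $L_I$-component $F(z)H(z)-G(z)\overline{K(\bar z)}$ and $J$-component $F(z)K(z)+G(z)\overline{H(\bar z)}$; applying (\ref{f^c}) to this, and moving conjugations inside using commutativity and $\overline{\overline{w}}=w$, one obtains
$$(f*g)^c_I(z)=\overline{F(\bar z)}\,\overline{H(\bar z)}-\overline{G(\bar z)}\,K(z)-[\,F(z)K(z)+G(z)\overline{H(\bar z)}\,]\,J.$$
On the other hand (\ref{f^c}) gives $g^c_I(z)=\overline{H(\bar z)}-K(z)J$ and $f^c_I(z)=\overline{F(\bar z)}-G(z)J$; substituting these two splittings into (\ref{f*g}) produces, after simplification, exactly the same $L_I$-component and $J$-component. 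Hence $(f*g)^c$ and $g^c*f^c$ agree on $L_I\cap\Omega$, and therefore on all of $\Omega$.

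For the symmetrization identity I would start from the associativity of the $*$-product and the conjugation law just established:
$$(f*g)^s=(f*g)*(f*g)^c=(f*g)*(g^c*f^c)=(f*(g*g^c))*f^c=(f*g^s)*f^c.$$
The decisive step is then the following property of a symmetrization. By (\ref{f^s}) the splitting of $h:=g^s$ on $L_I$ is $h_I(z)=H(z)\overline{H(\bar z)}+K(z)\overline{K(\bar z)}$, with zero $J$-component, and a direct check gives $\overline{h_I(z)}=h_I(\bar z)$; in particular $g^s$ is slice preserving (Remark \ref{symmLI}) and real on $\mathbb{R}$. For such an $h$ a one-line computation from (\ref{f*g}) shows that $*$-multiplication by $h$ is two-sided and collapses to the ordinary pointwise product: $f*h=h*f$ with $(f*h)_I=h_If_I$. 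Applying this twice gives $(f*g^s)*f^c=g^s*(f*f^c)=g^s*f^s$. Finally $f^s$ has the same special form as $g^s$, so $*$-multiplication by it is again the pointwise product; since the values of $f^s$ and $g^s$ on $L_I$ lie in $L_I$, inside which multiplication is commutative, their pointwise products commute, so $g^s*f^s=g^s f^s=f^s g^s=f^s*g^s$ (which, incidentally, shows that the juxtaposition product $f^s g^s$ is regular). Assembling the chain yields $(f*g)^s=f^s g^s=g^s f^s$.

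The two bracket-matching computations in the conjugation step are routine; the real obstacle is the lemma that the symmetrization is $*$-central and that $*$-multiplying by it reduces to pointwise multiplication. The care required there lies entirely in tracking which holomorphic factors are evaluated at $z$ and which at $\bar z$ as one passes through formula (\ref{f*g}), the point being that the identity $\overline{h_I(z)}=h_I(\bar z)$ is exactly what cancels the $z\leftrightarrow\bar z$ twist built into the $*$-product.
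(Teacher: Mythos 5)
Your proposal is correct and follows essentially the same route as the paper: the conjugation identity $(f*g)^c=g^c*f^c$ is verified by the same slice-splitting computation, and the symmetrization identity is then derived from it via associativity and the fact that a symmetrization is slice-preserving with $\overline{h_I(z)}=h_I(\bar z)$, so that $*$-multiplying by it is central and reduces to the pointwise product. The paper compresses this second step into ``it is sufficient to show $(f*g)^c=g^c*f^c$''; you have simply written out the details it leaves implicit, and your verification of the centrality lemma is accurate.
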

\begin{proof}
It is sufficient to show that $(f*g)^c = g^c*f^c$. As usual,
we can use the Splitting Lemma to write on $L_I\cap\Omega$ that
$f_I(z)=F(z)+G(z)J$ and $g_I(z)=H(z)+K(z)J$. We get
$$
f_I*g_I(z)=[F(z)H(z)-G(z)\overline{K(\bar
z)}]+[F(z)K(z)+G(z)\overline{H(\bar z)}]J
$$
and hence
$$
(f_I*g_I)^c(z)=[\overline{F(\bar z)}\ \overline{H(\bar
z)}-\overline{G(\bar z)}{K(z)}] -[F(z)K(z)+G(z)\overline{H(\bar
z)}]J.
$$
We now compute
$$
g_I^c(z)*f_I^c(z)=(\overline{H(\bar z)}-K(z)J)*(\overline{F(\bar
z)}-G(z)J)
$$
$$
=\overline{H(\bar z)}*\overline{F(\bar z)}-\overline{H(\bar
z)}*G(z)J -K(z)J*\overline{F(\bar z)}+K(z)J*G(z)J
$$
and conclude by Remark \ref{J*H}. \end{proof}

An important fact which generalizes the analogue result in \cite{caterina} is the following:
\begin{proposition}
Let $\Omega\subseteq\mathbb{H}$ be an axially symmetric s-domain and let
$f,g:\ \Omega\to\mathbb{H}$ be regular functions. Then
\begin{equation}\label{formula prodotto}
f*g(q) = f(q)\ g(f(q)^{-1} q f(q)),
\end{equation}
for all $q\in\Omega$.
\end{proposition}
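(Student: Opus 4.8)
The plan is to fix an arbitrary point $q=x+yI\in\Omega$ and verify the claimed equality at that single point, working on the slice $L_I$. First I would pick $J\in\mathbb{S}$ with $J\perp I$ and use the Splitting Lemma to write $f_I(z)=F(z)+G(z)J$ and $g_I(z)=H(z)+K(z)J$ on $L_I\cap\Omega$, with $F,G,H,K\colon L_I\cap\Omega\to L_I$ holomorphic; throughout, $z=x+yI$. The degenerate case $f(q)=0$ has to be handled first: then $F(z)=G(z)=0$, and since, by construction, $f*g$ restricted to $L_I$ equals $f_I*g_I$, formula (\ref{f*g}) gives $f*g(q)=f_I*g_I(z)=0$; so the identity holds, with the understanding that its right-hand side is interpreted as $0$ when $f(q)=0$. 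Hence from now on one may assume $c:=f(q)=F(z)+G(z)J\neq 0$.

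For the left-hand side, the Extension Lemma gives at once
$$
f*g(q)=f_I*g_I(z)=\bigl[F(z)H(z)-G(z)\overline{K(\bar z)}\bigr]+\bigl[F(z)K(z)+G(z)\overline{H(\bar z)}\bigr]J .
$$
For the right-hand side I would start from the observation that $c^{-1}qc=x+yI'$ with $I'=c^{-1}Ic\in\mathbb{S}$; since $\Omega$ is axially symmetric and $x+yI\in\Omega$, the whole $2$-sphere $x+y\mathbb{S}$, hence in particular $x+yI'$, lies in $\Omega$, so $g(c^{-1}qc)$ makes sense. I then apply the Representation Formula (Theorem \ref{formula}), taking the auxiliary imaginary unit to be $I$, to express $g(x+yI')$ through $g(x+yI)$ and $g(x-yI)$. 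Multiplying that identity on the left by $c$ and using the key simplification $cI'=c\,c^{-1}Ic=Ic$ (whence the mixed term carries a factor $cI'I=IcI$), the expression collapses to
$$
c\,g(c^{-1}qc)=\frac{1}{2}(c-IcI)\,g(x+yI)+\frac{1}{2}(c+IcI)\,g(x-yI).
$$
A one-line computation with $c=F(z)+G(z)J$ yields $IcI=-F(z)+G(z)J$, so $c-IcI=2F(z)$ and $c+IcI=2G(z)J$, giving $c\,g(c^{-1}qc)=F(z)\,g(x+yI)+G(z)J\,g(x-yI)$. Finally, substituting $g(x+yI)=H(z)+K(z)J$ and $g(x-yI)=H(\bar z)+K(\bar z)J$ and simplifying by means of the rule $Jw=\bar wJ$ for $w\in L_I$ together with $J^2=-1$, one recovers exactly the expression displayed above for $f*g(q)$, which proves the formula.

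The only genuinely non-routine step is recognizing the identity $cI'=Ic$: it is precisely what makes the contribution of the Representation Formula simplify, turning an a priori unwieldy non-commutative expression into the two clean terms $F(z)\,g(x+yI)$ and $G(z)J\,g(x-yI)$. Everything else is bookkeeping of products in the basis $\{1,I,J,IJ\}$, the only delicate point being to keep track correctly of the Schwarz-reflected holomorphic functions $\overline{H(\bar z)}$ and $\overline{K(\bar z)}$ occurring in the definition of the $*$-product, so that the two sides match term by term.
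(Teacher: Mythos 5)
Your proof is correct, and it takes a genuinely different route from the paper's. The paper defines $\psi(q)=f(q)\,g(f(q)^{-1}qf(q))$, rewrites it via the Representation Formula, verifies by direct differentiation that $\psi$ is regular, and then concludes by the Identity Principle using the fact that the formula is already known (from the power-series treatment in the ball case) on a small ball centered at a real point of $\Omega$. You instead verify the identity pointwise: splitting $f_I=F+GJ$, $g_I=H+KJ$, and exploiting $cI'=Ic$ for $I'=c^{-1}Ic$ to collapse $c\,g(c^{-1}qc)$ into $F(z)g(x+yI)+G(z)Jg(x-yI)$, which after the reflection rule $Jw=\bar wJ$ matches the defining formula (\ref{f*g}) term by term. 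Your computations check out (in particular $IcI=-F(z)+G(z)J$ and the final matching of the four products), and the degenerate case $f(q)=0$ is handled with the same convention the paper uses implicitly. What your approach buys is self-containedness: it does not invoke the previously known power-series case of the identity, nor the regularity of $\psi$, only the Splitting Lemma, the Representation Formula, and the definition of the $*$-product via the Extension Lemma. What the paper's approach buys is that the differential verification of regularity of $\psi$ is shorter than your basis bookkeeping once the ball case is granted, and it establishes along the way that the right-hand side is itself a regular function of $q$. One small point worth making explicit in your write-up is that evaluating $f*g$ at $q=x+yI$ via $f_I*g_I$ presupposes that the $*$-product is independent of the slice used to define it; this is implicitly assumed in the paper's definition, and your argument in fact reproves it, since the right-hand side $f(q)\,g(f(q)^{-1}qf(q))$ is manifestly choice-free.
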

\begin{proof}
Let $I$ be any element in $\mathbb{S}$ and let $q=x+yI$. If
$f(x+yI)\not=0$, it is easy to verify that
$$
f(x+yI)^{-1}(x+yI)f(x+yI)=x+yf(x+yI)^{-1}If(x+yI)
$$
with $f(x+yI)^{-1}If(x+yI)\in\mathbb{S}$. Using now the
representation formula (\ref{formula}) for the function $g$, we
get
$$
g(f(q)^{-1} q f(q))=g(x+yf(x+yI)^{-1}If(x+yI))
$$
$$
=\frac{1}{2}\{g(x+yI)+g(x-yI)-f(x+yI)^{-1}If(x+yI)[Ig(x+yI)-Ig(x-yI)
]\}
$$
and
$$
\psi(q):=f(q)g(f(q)^{-1} q f(q))
$$
$$
=\frac{1}{2}\{f(x+yI)[g(x+yI)+g(x-yI)]-If(x+yI)[Ig(x+yI)-Ig(x-yI)
]\}.
$$
If we prove that the function $f(q)g(f(q)^{-1} q f(q))$ is
regular, then our assertion will follow by the Identity
Principle, since formula (\ref{formula prodotto}) holds on a small
open ball of $\Omega$ centered at a real point (see Proposition
\ref{serie_domini}). Let us compute (with obvious notation for the
derivatives):
$$
\frac{\partial}{\partial x}\psi(x+yI)
$$
$$
= \frac{1}{2}\{f_x(x+yI)[g(x+yI)+g(x-yI)]
-If_x(x+yI)[Ig(x+yI)-Ig(x-yI) ]\}
$$
$$
+\frac{1}{2}\{f(x+yI)[g_x(x+yI)+g_x(x-yI)]
-If(x+yI)[Ig_x(x+yI)-Ig_x(x-yI) ]\}
$$
and
$$
I\frac{\partial}{\partial y}\psi(x+yI)
$$
$$
= \frac{1}{2}\{If_y(x+yI)[g(x+yI)+g(x-yI)]
+f_y(x+yI)[Ig(x+yI)-Ig(x-yI) ]\}
$$
$$
+\frac{1}{2}\{If(x+yI)[g_y(x+yI)+g_y(x-yI)]
+f(x+yI)[Ig_y(x+yI)-Ig_y(x-yI) ]\}.
$$
By using the three relations
$$
f_x(x+yI)+If_y(x+yI)= g_x(x+yI)+Ig_y(x+yI)=
g_x(x-yI)-Ig_y(x-yI)=0,
$$
we obtain that $(\frac{\partial}{\partial
x}+I\frac{\partial}{\partial y})\psi(x+yI)=0$. The fact that $I$
is arbitrary proves the assertion. \end{proof}

As a consequence, we can now identify the zeros of the $*$-product of two regular functions $f$ and $g$ defined on an axially symmetric s-domain, in terms of the zeros of $f$ and of $g$.

\begin{theorem}\label{zeriprodotto}
Let $\Omega\subseteq\mathbb{H}$ be an axially symmetric s-domain and let
$f,g:\ \Omega\to\mathbb{H}$ be regular functions. Then $f*g(q) =
0$ if and only if $f(q) = 0$ or $f(q) \neq 0$ and $g(f(q)^{-1} q
f(q))=0$.
\end{theorem}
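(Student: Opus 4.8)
The plan is to deduce the statement directly from the multiplicative formula (\ref{formula prodotto}), namely $f*g(q) = f(q)\, g(f(q)^{-1} q f(q))$, which holds for all $q \in \Omega$. The right-hand side is a product of two quaternions, and since $\mathbb{H}$ is a skew field, such a product vanishes if and only if one of the factors vanishes. So the whole argument reduces to a careful case analysis on whether $f(q) = 0$ or not.

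First I would fix $q = x + yI \in \Omega$ and split into two cases. If $f(q) = 0$, then the right-hand side of (\ref{formula prodotto}) is $0 \cdot g(\cdots) = 0$, so $f*g(q) = 0$; conversely this case is one of the alternatives in the statement, so nothing more is needed here. If $f(q) \neq 0$, then $f(q)^{-1} q f(q)$ is a well-defined point of $\mathbb{H}$, and moreover it lies in $\Omega$: indeed, as computed in the proof of the previous Proposition, $f(q)^{-1}(x+yI)f(q) = x + y\, f(q)^{-1} I f(q)$ with $f(q)^{-1} I f(q) \in \mathbb{S}$, so this point belongs to the $2$-sphere $x + y\mathbb{S}$, which is contained in $\Omega$ because $\Omega$ is axially symmetric and contains $q = x + yI$. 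Hence $g(f(q)^{-1} q f(q))$ is defined, and by (\ref{formula prodotto}) we have $f*g(q) = f(q)\, g(f(q)^{-1} q f(q))$ with $f(q) \neq 0$; therefore $f*g(q) = 0$ if and only if $g(f(q)^{-1} q f(q)) = 0$, which is exactly the second alternative in the statement.

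Combining the two cases gives both implications: $f*g(q) = 0$ forces $f(q) = 0$, or else $f(q) \neq 0$ and $g(f(q)^{-1} q f(q)) = 0$; and each of these two conditions in turn forces $f*g(q) = 0$ via (\ref{formula prodotto}). I do not expect any serious obstacle here: the only point requiring a line of justification is the observation that the conjugated point $f(q)^{-1} q f(q)$ stays inside $\Omega$ so that $g$ may be evaluated at it, and this is immediate from axial symmetry together with the computation already recorded in the proof of formula (\ref{formula prodotto}). Everything else is the elementary fact that a skew field has no zero divisors.
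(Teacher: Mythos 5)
Your proposal is correct and follows essentially the same route as the paper: both apply formula (\ref{formula prodotto}) and then use the fact that $\mathbb{H}$ has no zero divisors to split into the two cases $f(q)=0$ and $f(q)\neq 0$. Your extra remark that $f(q)^{-1}qf(q)$ lies in $x+y\mathbb{S}\subseteq\Omega$ by axial symmetry is a small point the paper leaves implicit, but it changes nothing in the argument.
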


\begin{proof}
Formula (\ref{formula prodotto}) implies
$$f*g(q) =
f(q)\ g(f(q)^{-1} q f(q)).$$ Therefore $f*g(q)=0$ if and only if
$f(q)g(f(q)^{-1} q f(q))=0$ if and only if either
 $f(q)=0$ or $f(q) \neq 0$ but then  $g(f(q)^{-1} q f(q))=0$.
\end{proof}

In particular, if $f*g$ has a zero in $S=x+y\s$ then either $f$ or
$g$ have a zero in $S$. However, the zeros of $g$ in $S$ need not
be in one-to-one correspondence with the zeros of $f*g$ in $S$
which are not zeros of $f$.

Finally, we use the notion of regular conjugate and symmetrization introduced above to define the
regular reciprocal of a, regular function:
\begin{definition}
Let $\Omega\subseteq\mathbb{H}$ be an axially symmetric s-domain, let
 $f:\ \Omega\to\mathbb{H}$ be a regular function and $Z_{f^s}$ be the set of zeros of its symmetrization $f^s$. We define the regular reciprocal of $f$ as the function
 $f^{-*}: \Omega\setminus Z_{f^s}\to\hh$ given by
\begin{equation}\label{inverse}
f(q)^{-*}=f^s(q)^{-1}f^c(q).
\end{equation}
\end{definition}

The notion of regular reciprocal is quite natural in the environment of regular functions and has already been implicitly used while establishing a new Cauchy formula, as we point out in the following

\begin{remark}{\rm The use of the Cauchy kernel which appears in \cite{cgs} to prove the validity of a Cauchy formula on axially symmetric s-domains was suggested by the functional calculus introduced in \cite{cgssann}, \cite{cgss}, \cite{cosa} and \cite{functionalcss}. This Cauchy kernel can be also found as the regular reciprocal of the function $S(q)= q-s$ where $s$ is any fixed quaternion. Indeed we have that
$S^{-*}(q)=(q^2-2{\rm Re}[s]q+|s|^2)^{-1}(q-\bar{s})$. }
\end{remark}

%%%%%%%%%%%%%%%%%%%%%%%%%%%%%%%%%%%%%%%%%%%%%%%%%%%%%%%%%%%%%%%%%%%%%%%%%%%%%%%%%


\begin{thebibliography}{99}
\bibitem{cgs} F. Colombo, G. Gentili, I. Sabadini, {\em A Cauchy kernel for slice regular functions},  arXiv:0902.4771  [math.CV].

\bibitem{cgssann} F. Colombo, G. Gentili, I. Sabadini, D.C. Struppa,
{\em A functional calculus in a  non commutative  setting},
 Electron. Res. Announc. Math. Sci, {\bf 14} (2007), 60-68.

\bibitem{cgss} F. Colombo, G. Gentili, I. Sabadini, D.C. Struppa, {\em
Non commutative functional calculus:  bounded operators}, to appear in: Compl. Anal. Oper. Theory.

\bibitem{cgss2} F. Colombo, G. Gentili, I. Sabadini, D.C. Struppa, {\em An overview on functional calculus in different settings},
{\em  Hypercomplex analysis}, Trends in Math., Birkh\"auser, Basel, 2009, 69--99.


%\bibitem{cgss1} F. Colombo, G. Gentili, I. Sabadini, D.C. Struppa, {\em
%Non commutative functional calculus:  unbounded operators}, preprint, 2007.


\bibitem{cosa} F. Colombo, I. Sabadini, {\em On some properties of the quaternionic
functional calculus}, to appear in: J. Geom. Anal.


%\bibitem{cs} F. Colombo, I. Sabadini, {\em The Cauchy formula
%with $s$-monogenic kernel and  a functional calculus for noncommuting operators}, preprint, 2008.

\bibitem{csss} F. Colombo, I. Sabadini, F. Sommen, D.C.
Struppa, {\em Analysis of Dirac Systems and Computational Algebra},
Progress in Mathematical Physics, Vol. 39, {Birkh\"auser}, Boston,
2004.


\bibitem{functionalcss} F. Colombo, I. Sabadini, D.C. Struppa, {\em
 A new functional calculus for noncommuting operators},
 J. Funct. Anal., {\bf 254} (2008), 2255--2274.

\bibitem{slicecss} F. Colombo, I. Sabadini, D.C. Struppa, {\em
 Slice monogenic functions}, to appear in: Israel Journal of Mathematics.




\bibitem{fueter 1} R. Fueter, {\em Die Funktionentheorie der
Differentialgleichungen $\bigtriangleup u = 0$ und $\bigtriangleup \bigtriangleup u = 0$
mit vier reellen Variablen}, Comm. Math. Helv. {\bf 7} (1934),
307--330.

\bibitem{fueter 2} R. Fueter, {\em \"{U}ber eine Hartogsschen Satz},
Comment. Math. Helv., {\bf 12} (1939/40), 75--80.

\bibitem{caterina} G. Gentili, C. Stoppato, {\em Zeros of regular functions and polynomials of
a quaternionic variable}, Michigan Math. J., {\bf 56} (2008), 655--667.

\bibitem{open} G. Gentili, C. Stoppato, {\em The open mapping
theorem for quaternionic regular functions}, E-print
arXiv:0802.3861v1 [math.CV], to appear in: Ann. Sc. Norm. Super. Pisa Cl. Sci. (5).

\bibitem{gs} G. Gentili, D.C. Struppa, {\em A new approach to Cullen-regular functions
of a quaternionic variable},
 C.R. Acad. Sci. Paris, {\bf 342} (2006), 741--744.

\bibitem{advances} G. Gentili, D.C. Struppa, {\em A new theory of regular functions
of a quaternionic variable}, Adv. Math. {\bf 216} (2007),  279--301.

%\bibitem{hormander} L. H\"ormander, {\em Differentiability properties of solutions of systems of differential %equations}, Arkiv Mat. {\bf 3} (1958), 527-535.

\bibitem{lam} T.Y. Lam, \textit{A first course in noncommutative
rings}. Graduate Texts in Mathematics, 123. Springer-Verlag, New
York, 1991, 261--263.
\end{thebibliography}
\end{document}